\numberwithin{equation}{section}
\newtheorem{prop}{Proposition}[section]
\newtheorem{theorem}[prop]{Theorem}
\newtheorem{lemma}[prop]{Lemma}
\newtheorem{corollary}[prop]{Corollary}
\newtheorem{remark}[prop]{Remark}
\newtheorem{example}[prop]{Example}
\newtheorem{definition}[prop]{Definition}
\def\begeq{\begin{equation}}
	\def\endeq{\end{equation}}
\def\<{\langle}
\def\>{\rangle}
\def\({\left(}
\def\){\right)}
\def\pa{\partial}
\def\e{\varepsilon}
\def\Om{\Omega}
\def\we{\wedge}
\def\s{\sigma}
\def\lbr{\lbrace}
\def\rbr{\rbrace}
\def\P{\mathcal P_p}
\def\M{\mathcal M_p}
\DeclareMathOperator{\la}{\lambda}
\begin{document}
	
	\title{Interior estimates for $p$-plurisubharmonic functions}
	\author{S\l awomir Dinew }

	\date{}
	\maketitle
	
	\begin{abstract} We study a Monge-Amp\`ere type equation in the class of $p$-pluri-\newline subharmonic functions and establish first and second order interior estimates. As an application of these we show that $p$-plurisubharmonic functions with constant operator and quadratic growth must be quadratic polynomials.
		
	\end{abstract}
	\section*{Introduction}
	
	Given a Riemannian manifold $(X,g)$, a $C^2$ smooth real valued function $u$ is said to be {\it subharmonic} if in local coordinates it satisfies the inequality
	$$\Delta_{g}(u):=\frac1{\sqrt{det(g)}}\sum_{k,l}\frac{\pa}{\pa x_k}(\sqrt{det(g)}g^{kl}\frac{\pa u}{\pa x_l})\geq 0,$$
	the definition being independent on the choice of local coordinates.
	
	Let now $X$ be a submanifold in the Euclidean space $\mathbb R^n$ equipped with the restriction of the Euclidean metric. Given a smooth function $u$ living in $\mathbb R^n$ it is a natural question when does it restrict to an intrinsically subharmonic one on $X$. 
	As a special
	case we note that a subharmonic function in $\mathbb R^n$ need not restrict subharmonically to an affine submanifold (take the line $\mathbb R\times\lbr0\rbr$ in $\mathbb R^2$ and the function $u(x,y)=-x^2+y^2$), while convex functions by definition restrict to convex, hence subharmonic 
	functions on any affine subspaces in $\mathbb R^n$.
	
	In these circle of ideas Harvey and Lawson \cite{HL13} gave a beautiful characterization of the class of functions that restrict subharmonically on {\it minimal submanifolds}:
	\begin{theorem}\label{Harvey-Lawson}[Harvey-Lawson]
		Let $\Om$ be a domain in $\mathbb R^n$ and $u$ be a $C^2$ function on $\Omega$. Then $u$ restricts subharmonically to any minimal $p$-dimensional submanifold of $\Omega$ ($1\leq p\leq n$) if and only if at each $x\in\Om$ the Hessian $D^2u(x)$ has eigenvalues $\la_1,\cdots,\la_n$ satisfying
		\begin{equation}\label{p-tuple}
			\forall 1\leq i_1<i_2<\cdots<i_p\leq n\ \ \ \la_{i_1}+\cdots+\la_{i_p}\geq 0.
		\end{equation}
		
	\end{theorem}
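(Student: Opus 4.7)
My plan is to reduce the equivalence to a pointwise linear-algebra statement about $D^2u(x)$ via the standard formula for the Laplace-Beltrami operator on a submanifold of $\mathbb R^n$. For a $C^2$ submanifold $M$ of dimension $p$, an orthonormal tangent frame $e_1,\ldots,e_p$ at $x\in M$ made geodesic at $x$, and the flat ambient connection $D$ decomposed as $D_XY=\nabla^M_XY+II(X,Y)$ into tangential and normal parts, a direct computation yields
$$
\Delta_M(u|_M)(x)=\sum_{i=1}^p D^2u(x)(e_i,e_i)+Du(x)\cdot \vec H(x),
$$
where $\vec H(x)=\sum_i II(e_i,e_i)$ is the mean curvature vector. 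The minimality hypothesis $\vec H\equiv 0$ kills the gradient term, leaving
$$
\Delta_M(u|_M)(x)={\rm tr}\bigl(D^2u(x)\big|_{T_xM}\bigr).
$$

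For the sufficiency $(\Leftarrow)$ I would invoke the Ky Fan / Courant--Fischer minimum principle: for a symmetric endomorphism $A$ of $\mathbb R^n$ with eigenvalues $\lambda_1\le\cdots\le\lambda_n$,
$$
\min\bigl\{{\rm tr}(A|_V):V\subset\mathbb R^n,\ \dim V=p\bigr\}=\lambda_1+\cdots+\lambda_p,
$$
attained on the span of the first $p$ eigenvectors. Applied to $A=D^2u(x)$, the hypothesis \eqref{p-tuple} — which in particular contains the smallest such partial sum — forces this minimum to be nonnegative, whence ${\rm tr}(D^2u(x)|_V)\ge 0$ for every $p$-plane $V$, and in particular for $V=T_xM$.

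For the necessity $(\Rightarrow)$ I would exploit the fact that every affine $p$-plane is a minimal submanifold, its second fundamental form vanishing identically. Given $x\in\Omega$ and any eigenvectors $v_{i_1},\ldots,v_{i_p}$ of $D^2u(x)$, set $V={\rm span}(v_{i_1},\ldots,v_{i_p})$ and take $M=(x+V)\cap\Omega$, which is a minimal $p$-dimensional submanifold through $x$. The hypothesis $\Delta_M(u|_M)(x)\ge 0$ then reads, via the formula above, exactly as $\lambda_{i_1}+\cdots+\lambda_{i_p}\ge 0$.

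The only technical points demanding any care are the derivation of the submanifold Laplacian formula — a routine Riemannian-geometry calculation once one decomposes $D_XY$ into its $M$-tangent and $M$-normal parts — and the Ky Fan identity for partial traces, which is classical. Neither constitutes a real obstacle; the substantive observation is that minimality removes the first-order term, after which the equivalence collapses cleanly to the eigenvalue inequality \eqref{p-tuple}.
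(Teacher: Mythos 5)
Your argument is correct and is essentially the standard Harvey--Lawson proof: the submanifold Laplacian formula with the mean-curvature term killed by minimality, the Ky Fan minimum principle for the sufficiency, and affine $p$-planes through eigenvector spans for the necessity. Note that the paper itself states this theorem as a citation of \cite{HL13} and gives no proof, so there is nothing internal to compare against; your write-up fills that gap correctly.
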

	Note that such functions, called {\it $p$-plurisubharmonic}, interpolate
	between subharmonic ones (in the case $p=n$) and the convex ones for $p=1$. Similar notions can be defined on any Riemannian manifold, again using the geometric fact that functions restrict to subharmonic ones on $p$-dimensional minimal submanifolds- see \cite{HL13}. Analogous notion is also meaningful in the complex setting- \cite{HL13} with the geometric interpretation that functions restrict subharmonically
	to $p$-dimensional {\it complex} submanifolds.
	
	In \cite{HL13} and later in \cite{HL1,HL2} Harvey and Lawson developed a non- linear potential theory associated to the class of $p$-plurisubharmonic functions.  As it turns out many results from the complex {\it pluripotential theory} (see \cite{K2} for an overview)
	have their $p$-plurisubharmonic counterparts- we refer
	to the paper \cite{HL12} for a nice survey. A major difference though was the lack of a natural differential operator playing the role of a (nonlinear) Laplacian just as the complex Monge-Amp\`ere operator does in the pluripotential setting.
	
	One of the possibilities is to try to mimic the properties of the determinant and define and operator $\M(u)$ by
	$$\M(u(x))=\Pi_{1\leq i_1<i_2<\cdots<i_p\leq n}(\la_{i_1}+\cdots+\la_{i_p}),$$
	again $\la_i=\la_i(x)$ denoting the eigenvalues of the complex Hessian at $x$ (see Section 4 in \cite{HL1}). Such an operator, in the complex case, was discussed by Sadullaev (\cite{Sa}) and, in the special case $p=n-1$, by Tosatti and Weinkove \cite{TW}.
	
	The operator $\M(u)$, even though defined through eigenvalues, is in fact a special example of a fully nonlinear operator of Hessian type. Unfortunately, as shown in \cite{D}\footnote{In \cite{D} the complex analogue of $\M$ was invesitgated but the proof there carries through in the real setting.}
	the operator $\M$ does not satisfy an {\it integral comparison principle} which makes the associated potential theory much harder. On the bright side from the very definition it is clear that $\M(u)\geq 0$ for any $p$-plurisubharmonic function.
	It turns out that $\M$ is also an {\it elliptic} operator, when 
	restricted to the class of $p$-plurisubharmonic functions, while $\widetilde{\mathcal M}_p:=\M^{1/\binom np}$ is a concave operator. All this implies that the nondegenerate Dirichlet problem for $\M$ is solvable in smoothly bounded strictly convex domains as a special case
	of the Caffarelli-Nirenberg-Spruck theorem (\cite{CNS}) shows:

	\begin{theorem}\label{CafNirSpruck}[Caffarelli-Nirenberg-Spruck] Let $\Om$ be a bounded strictly convex domain in $\mathbb R^n$ with $\pa\Om\in C^{3,1}$ and $\psi\in C^{3,1}(\pa \Om)$ be given. Then for any $f\in C^{1,1}(\Omega\times\mathbb R)$, such that $f\geq c>0$ the Dirichlet problem
		\begin{equation}\label{cns}
			\begin{cases}
				u- p-{\rm plurisubharmonic\ in}\ \Om\cap C(\overline{\Om});\\
				\M(u)=f(x,u)\ \ {\rm in}\ \ \Om;\\
				u|_{\pa \Om}=\psi
			\end{cases}
		\end{equation}
		admits a unique solution $u$, which is $C^{1,1}$ up to the boundary.
	\end{theorem}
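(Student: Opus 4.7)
The plan is to cast the Dirichlet problem \eqref{cns} directly in the Caffarelli--Nirenberg--Spruck framework \cite{CNS}, so the proof reduces to verifying the structural hypotheses on the operator and the geometric hypotheses on the domain. The natural admissible cone is
$$\G_p=\lbr A\in{\rm Sym}_n(\mathbb R):\ \la_{i_1}(A)+\cdots+\la_{i_p}(A)>0\ {\rm for\ every}\ 1\leq i_1<\cdots<i_p\leq n\rbr,$$
an open convex cone (being an intersection of open half-spaces after restricting to diagonal representatives), symmetric under permutations of the eigenvalues, and containing the positive definite cone $\G_n$. On this cone the normalized operator $\widetilde{\mathcal M}_p=\M^{1/\binom{n}{p}}$ should be shown to be smooth, positive, vanishing on $\pa\G_p$, homogeneous of degree one, strictly monotone in each eigenvalue (ellipticity), and concave. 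Positivity, vanishing, and homogeneity are immediate from the definition, and ellipticity has already been recorded in the introduction. The concavity, which is the one structural point that is not essentially tautological, is obtained by writing $\widetilde{\mathcal M}_p$ as the geometric mean of the $\binom{n}{p}$ positive linear forms $L_I(\la)=\sum_{i\in I}\la_i$ and invoking the standard fact, a consequence of AM--GM, that the geometric mean of positive concave functions is concave.

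The second ingredient is the geometric boundary hypothesis of \cite{CNS}, which requires that at every point of $\pa\Om$ the principal curvatures form an admissible vector for the operator restricted to the tangent directions. Strict convexity makes all principal curvatures of $\pa\Om$ strictly positive, so every $p$-element partial sum is positive and the condition is automatic. The prescribed regularities $\pa\Om,\psi\in C^{3,1}$ and $f\in C^{1,1}(\Om\times\mathbb R)$ with $f\geq c>0$ match the hypotheses of \cite{CNS}, and a global admissible subsolution is furnished by a large positive multiple of $|x-x_0|^2$ adjusted by a harmonic extension of $\psi$: since any positive scalar matrix lies in the interior of $\G_p$ and $\widetilde{\mathcal M}_p$ scales linearly under dilations, the subsolution inequality $\widetilde{\mathcal M}_p(D^2\underline u)\geq \widetilde f$ can be arranged by taking the coefficient large enough.

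Once these verifications are in place, the existence of a unique admissible solution $u\in C^{1,1}(\overline{\Om})$ follows verbatim from the main theorem of \cite{CNS}; uniqueness in the admissible class is provided by the classical pointwise comparison principle for concave elliptic operators on $\G_p$. I do not expect any essentially new analytical difficulty: the only structural point requiring genuine argument is concavity of $\widetilde{\mathcal M}_p$, and the apparent tension with \cite{D} is harmless because the failure of an \emph{integral} comparison principle for $\M$ does not preclude the pointwise comparison principle on which \cite{CNS} relies.
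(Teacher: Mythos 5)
Your proposal is correct and follows essentially the same route as the paper, which does not prove this theorem at all but simply quotes it as a special case of \cite{CNS} after recording the ellipticity of $\M$ on $p$-plurisubharmonic functions and the concavity of $\widetilde{\mathcal M}_p$; your verification of the cone, operator and boundary hypotheses is exactly what that citation presupposes. The one ingredient you argue differently is concavity: you obtain it as the geometric mean of the positive linear forms $\la\mapsto\la_{i_1}+\cdots+\la_{i_p}$ (concavity in the eigenvalues, which is what the \cite{CNS} framework needs), whereas the paper proves concavity in the matrix entries via the infimum (Bellman/G\aa rding) representation of Lemma \ref{concave}; both arguments are valid.
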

	
	The aim of this note is to investigate the interior regularity and Liouville type theorems for the $p$-plurisubharmonic solutions of the $\M$ equation. Following closely the
	arguments of Chou and Wang \cite{CW} (who dealt with a similar Hessian type equation) we prove the following {\it interior} $C^1$ and Pogorelov type {\it interior} $C^2$ estimates for $\M$:
	\begin{theorem}[First order interior estimate]
		Let $u$ be a $p$-plurisubharmonic function in the ball $B_r(x_0)$. Assume that $u\in C^3(B_r(x_0))\cap C^1(\overline{B_r(x_0)})$ and $u$ solves the equation
		$$\M(u)=f(x,u),$$
		for a given non negative Lipschitz function $f$. Then 
		\begin{equation*}
			|Du(x_0)|\leq C
		\end{equation*}
		for a constant $C$  dependent on $n, p, r$,  $sup_{B_r(x_0)}|u|$ and the Lipschitz bound on $f$. 
	\end{theorem}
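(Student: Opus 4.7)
The plan is to follow the Chou--Wang maximum principle scheme applied to a logarithmic auxiliary function. Let $\eta \in C_c^2(B_r(x_0))$ be a nonnegative cut-off with $\eta(x_0)=1$, $|D\eta|^2 \le C\eta/r^2$ and $|D^2 \eta|\le C/r^2$, and consider
$$\Phi(x) = \log \eta(x) - \alpha u(x) + \log |Du(x)|^2,$$
on $B_r(x_0) \cap \{Du \ne 0\}$, where $\alpha>0$ is a constant to be chosen in terms of $\sup_{B_r}|u|$. Since $\eta$ vanishes on $\partial B_r(x_0)$, $\Phi$ attains its maximum at some interior point $\bar x$ with $Du(\bar x)\ne 0$; controlling $\Phi(\bar x)$ controls $|Du(x_0)|^2$ up to a multiplicative constant depending only on $\sup_{B_r}|u|$.

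At $\bar x$ I use $\nabla \Phi(\bar x)=0$ and $L\Phi(\bar x)\le 0$, where $L:=F^{ij}\partial_{ij}$ is the linearization of $\M$ with $F^{ij}=\partial \M/\partial u_{ij}$. Rotate coordinates so that $D^2u(\bar x)=\operatorname{diag}(\lambda_1,\ldots,\lambda_n)$; then $F^{ij}=F^{ii}\delta^{ij}$, and each $F^{ii}\ge 0$ since on the $p$-plurisubharmonic cone every factor $\lambda_{i_1}+\cdots+\lambda_{i_p}$ of $\M$ is nonnegative. Two structural identities drive the computation. First, Euler's identity applied to $\M$, which is homogeneous of degree $\binom{n}{p}$ in the eigenvalues, gives
$$\sum_i F^{ii}\lambda_i = \tbinom{n}{p}\,\M(u) = \tbinom{n}{p} f(x,u) \ge 0.$$
Second, differentiating $\M(D^2u)=f(x,u)$ in $x_k$ yields
$$F^{ii}u_{iik} = f_{x_k}+f_u u_k,$$
which controls the third-order terms appearing in $L|Du|^2$ by $(\mathrm{Lip}\, f)(1+|Du|)$.

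Expanding $L\Phi$ at $\bar x$ produces five contributions: the \emph{good term} $2|Du|^{-2}\sum_i F^{ii}\lambda_i^2$ coming from $L|Du|^2$; a cut-off error bounded by $Cr^{-2}\sum_i F^{ii}$ (incorporating both $L\eta/\eta$ and $-F^{ii}\eta_i^2/\eta^2$); the Lipschitz error from the third-order identity above; the zeroth-order term $-\alpha Lu = -\alpha \binom{n}{p} f \le 0$; and, most delicately, a cross-gradient correction $-4|Du|^{-4}\sum_i F^{ii}\lambda_i^2 u_i^2$ arising from $-F^{ij}(\partial_i\log|Du|^2)(\partial_j\log|Du|^2)$.

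The main obstacle, as is typical for gradient estimates for fully nonlinear operators, is to absorb the cross-gradient correction into the good term, since a priori the two have the same order in the $\lambda_i$. For this I exploit $\nabla\Phi(\bar x)=0$, which in diagonal coordinates becomes
$$\frac{2\lambda_k u_k}{|Du|^2} = \alpha u_k - \frac{\eta_k}{\eta},\qquad k=1,\ldots,n.$$
Squaring, multiplying by $F^{kk}$, and summing replaces the cross term by an expression bounded by $C|Du|^2 \sum_k F^{kk}(\alpha^2 u_k^2+\eta_k^2/\eta^2)$, which combined with $\sum_i F^{ii}u_i^2 \le |Du|^2\sum_i F^{ii}$ and the Cauchy--Schwarz inequality $(\sum_i F^{ii}\lambda_i u_i)^2\le (\sum_i F^{ii}\lambda_i^2)(\sum_i F^{ii}u_i^2)$ can be absorbed into the good term once $|Du(\bar x)|$ is sufficiently large and $\alpha$ is fixed in terms of $\sup_{B_r}|u|$. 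The resulting inequality then forces $|Du(\bar x)|^2 \le C(n,p,r,\sup_{B_r}|u|,\mathrm{Lip}\, f)$, and evaluating at $x_0$ via $\Phi(x_0)\le \Phi(\bar x)$ yields the claimed gradient bound.
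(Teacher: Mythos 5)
Your scheme correctly isolates the crux --- absorbing the cross-gradient correction $-4|Du|^{-4}\sum_iF^{ii}\lambda_i^2u_i^2$ and the cutoff errors into the good term $2|Du|^{-2}\sum_iF^{ii}\lambda_i^2$ --- but the proposed absorption does not close. Two problems. First, the cutoff contributions ($L\eta/\eta$, $F^{ii}\eta_i^2/\eta^2$, and the $\eta$-part of the squared critical equation) are all of size $Cr^{-2}\eta^{-1}\sum_iF^{ii}$, i.e.\ proportional to the \emph{full trace}, whereas after your Cauchy--Schwarz step the good term is only bounded below by a multiple of $\sum_iF^{ii}u_i^2$ (or, worse, of the single coefficient $F^{kk}$ in the dominant gradient direction). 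Since $\M$ is degenerate elliptic, $\sum_iF^{ii}u_i^2$ can be an arbitrarily small fraction of $|Du|^2\sum_iF^{ii}$ --- for instance when $Du$ points along the eigenvector of the largest eigenvalue, where $F^{kk}$ is the smallest of the coefficients --- so no choice of $\alpha$ and no largeness of $|Du(\bar x)|$ forces the absorption. Second, the sign of $-\alpha u$ works against you: the critical equation $2\lambda_ku_k/|Du|^2=\alpha u_k-\eta_k/\eta$ makes $\lambda_k\approx\tfrac{\alpha}{2}|Du|^2$ large \emph{positive} in the dominant gradient direction, exactly the configuration in which $F^{kk}$ is smallest; and the fallback $\sum_iF^{ii}\lambda_i=\binom np f$ gives nothing here because only $f\ge0$ is assumed.

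What is missing is a structural comparison between the coefficient in the gradient direction and the trace, and this is where the geometry of the cone $P_p$ must enter. The paper arranges, via the weight $\varphi(u)=(N-u)^{-1/2}$ with $\varphi'>0$ and the normalization $Du(\bar x)\parallel e_1$, that $u_{11}\le -c<0$ at the maximum point; it reduces $D^2u(\bar x)$ only to \emph{arrowhead} form (full diagonalization is in general incompatible with aligning $e_1$ with the gradient, which your proposal overlooks); and it then invokes Lemma \ref{newwritten}: for an arrowhead matrix in $P_p$ with $a_{11}\le-c$ one has $\M^{11}\ge\theta\sum_l\M^{ll}$. This is precisely the inequality that lets the good term $\M^{11}u_1^3$ dominate every trace-proportional error. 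Your argument contains no analogue of Lemma \ref{newwritten} or Lemma \ref{usedingradient}; even if you flip the sign to $+\alpha u$ so that the critical equation produces a large \emph{negative} second derivative in the gradient direction, you would still need such a lemma to convert that sign information into $F^{kk}\ge\theta\sum_iF^{ii}$. Without it the maximum-principle computation cannot be completed.
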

	
	\begin{theorem}[Second order interior estimate]
		Let $\Om$ be a bounded domain in $\mathbb R^n$ and  the $p$-plurisubharmonic function $u\in C^4(\Omega)\cap C^{1,1}(\overline{\Om})$ solves the equation
		$$\M(u)=f(x,u),$$
		where $f\in C^{1,1}(\overline{\Om}\times\mathbb R)$ is given. Suppose that $f\geq f_0>0$, and that there is a $p$-plurisubharmonic function $w$, satisfying $w>u$ in $\Om$ with equality on $\partial\Om$. Then for any fixed $\delta>0$ there is a bound
		\begin{equation*}
			(w(x)-u(x))^{1+\delta}|D^2u(x)|\leq C,
		\end{equation*}
		where $C$ depends on $n,p, f_0,sup_{\Om}(|Du|+|Dw|)$ and $||f||_{C^{1,1}}$.
	\end{theorem}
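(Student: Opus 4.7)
I propose to adapt the Pogorelov-type maximum-principle strategy of Chou--Wang \cite{CW} to the operator $\M$. Consider the auxiliary function
$$G(x,\xi) := (1+\delta)\log(w-u) + \log u_{\xi\xi} + \tfrac{\alpha}{2}|Du|^2,\qquad (x,\xi)\in\Omega\times S^{n-1},$$
with $\alpha>0$ a small parameter to be chosen in terms of $\sup(|Du|+|Dw|)$. Since $w-u$ vanishes on $\partial\Omega$, $G$ attains its supremum at an interior pair $(x_0,\xi_0)$. After a rotation I may assume $\xi_0=e_1$ and $D^2u(x_0)=\mathrm{diag}(\lambda_1,\ldots,\lambda_n)$ with $\lambda_1=\max_i\lambda_i$. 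Writing $F^{ij}=\partial\log\M/\partial u_{ij}$, at $x_0$ this matrix is diagonal with entries $F^{ii}=\sum_{I\ni i}\sigma_I^{-1}$, where $\sigma_I=\sum_{k\in I}\lambda_k\geq 0$.

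The computation rests on four structural facts about $\M$. (a) \emph{Euler's identity}: $\sum F^{ii}\lambda_i=\binom{n}{p}$, since each $\sigma_I$ is $1$-homogeneous in $\lambda$. (b) \emph{Concavity}: because $\widetilde{\mathcal M}_p=\M^{1/\binom{n}{p}}$ is concave, so is $\log\M$, yielding $F^{ij,kl}u_{ij1}u_{kl1}\leq 0$; differentiating $\log\M(u)=\log f(x,u)$ once and twice in $x_1$ therefore gives $u_kF^{ii}u_{iik}=O(1)$ and $F^{ii}u_{ii11}\geq -C(1+\lambda_1)$. (c) \emph{Positivity on $p$-psh test functions}: in the diagonal-of-$u$ basis one has $F^{ij}=\sum_I\sigma_I^{-1}P^I_{ij}$, where $P^I$ is the orthogonal projection onto $V_I=\mathrm{span}\{e_i:i\in I\}$; combined with the dual characterisation of the $p$-positive cone (namely $v$ is $p$-psh iff $\mathrm{tr}(D^2v|_V)\geq 0$ for every $p$-plane $V$), this gives $F^{ij}w_{ij}=\sum_I\sigma_I^{-1}\mathrm{tr}(D^2w|_{V_I})\geq 0$. (d) \emph{Eigenvalue bound}: $p$-plurisubharmonicity of $u$ together with $\lambda_1=\max_i\lambda_i$ forces $|\lambda_i|\leq(p-1)\lambda_1$, whence $\sigma_I\leq p\lambda_1$ for $I\ni 1$ and $F^{11}\lambda_1^2\geq c_{n,p}\lambda_1$.

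The first-order condition $G_i(x_0)=0$ reads
$$\frac{u_{11i}}{u_{11}} = -(1+\delta)\frac{w_i-u_i}{w-u} - \alpha u_i\lambda_i.$$
Plugging its square into $-F^{ii}u_{11i}^2/u_{11}^2$, absorbing the resulting cross term $2\alpha(1+\delta)\sum F^{ii}u_i\lambda_i(w_i-u_i)/(w-u)$ by AM-GM (which is what forces $\alpha\sup|Du|^2\ll 1$), and collecting (a)--(d), the second-order inequality $F^{ii}G_{ii}(x_0)\leq 0$ collapses to a schematic bound
$$\tfrac{\alpha}{2}F^{ii}\lambda_i^2\;\leq\;\frac{C_\delta\sup(|Du|+|Dw|)^2\sum_iF^{ii}}{(w-u)^2}\;+\;\frac{C}{w-u}\;+\;C.$$
The left side dominates $c_{n,p}\alpha\lambda_1$ by (d), and the right side is controlled in terms of $\lambda_1$ by using the equation $\prod_I\sigma_I=f\geq f_0$ with (d) to bound each $\sigma_I$ from below and hence $\sum F^{ii}$ from above in terms of $\lambda_1$ and $f_0$. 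A final bookkeeping step, along the lines of \cite{CW}, then shows that if $(w-u)^{1+\delta}\lambda_1$ at $x_0$ were arbitrarily large, the inequality would fail.

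The hard part is precisely this last bookkeeping: both the lower bound $F^{ii}\lambda_i^2\gtrsim\lambda_1$ and the available upper bound on $\sum F^{ii}$ grow with $\lambda_1$, and the weight $(w-u)^{1+\delta}$ has to be tuned exactly against the $(w-u)^{-2}$ loss on the right. The strict positivity of $\delta$ is essential—the classical real Monge-Amp\`ere exponent $2$ is out of reach because the concavity of $\M$ is weaker than that of $\det$, which is the same obstruction responsible for the analogous loss in the Chou--Wang $k$-Hessian Pogorelov estimate.
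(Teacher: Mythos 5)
Your setup (logarithmic Pogorelov test function, first and second order conditions at the maximum, facts (a)--(d)) matches the paper's framework, and facts (a)--(d) are all correct. But the decisive step is not ``bookkeeping,'' and the route you sketch for it cannot close. Two concrete problems. First, you discard the entire concavity term $F^{ij,kl}u_{ij1}u_{kl1}\leq 0$ when differentiating the equation twice. The paper instead retains the negative off-diagonal pieces $2\sum_{l\geq 2}\widetilde{\mathcal M}_p^{1l,l1}u_{11l}^2$ and uses them, via a dedicated algebraic lemma (Lemma \ref{usedinhessian}: $-\frac{2(\delta+1)}{\delta+2}\M^{1i,i1}\geq \M^{ii}/\la_1$ when $\la_{n-p+1}\leq\e\la_1$), to absorb the bad third-order terms $-F^{ii}u_{11i}^2/u_{11}^2$ for $i\geq 2$; once you throw the whole quadratic form away, those terms must be fed through the first-order condition and land as $\sum_{i}F^{ii}(w_i-u_i)^2/(w-u)^2$ on the bad side, with the full trace $\sum_iF^{ii}$ in front.

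Second, your plan for controlling that trace fails quantitatively. From $\prod_I\sigma_I=f\geq f_0$ and $\sigma_I\leq p\la_1$ the best lower bound on the smallest $\sigma_I$ is $f_0/(p\la_1)^{\binom{n}{p}-1}$, so the upper bound on $\sum_iF^{ii}=p\sum_I\sigma_I^{-1}$ grows like $\la_1^{\binom{n}{p}-1}$, while the good term only delivers $c_{n,p}\alpha\la_1$. The resulting inequality $\alpha\la_1\lesssim \la_1^{\binom{n}{p}-1}/(w-u)^2$ gives no bound on $(w-u)^{1+\delta}\la_1$ at all. The paper avoids this by a dichotomy on $\la_{n-p+1}$ versus $\e\la_1$: when $\la_{n-p+1}\geq\e\la_1$, the good term itself carries a factor of the full trace (Lemma \ref{usedingradient}: $\widetilde{\mathcal M}_p^{n-p+1,n-p+1}\geq\theta\sum_l\widetilde{\mathcal M}_p^{ll}$), so the trace cancels on both sides; when $\la_{n-p+1}\leq\e\la_1$, Lemma \ref{usedinhessian} removes all trace-weighted bad terms except the single one carrying $\widetilde{\mathcal M}_p^{11}$, which is then matched against $\widetilde{\mathcal M}_p^{11}u_{11}^2$ using $\widetilde{\mathcal M}_p^{11}\la_1\geq\frac1n\M^{1/\binom np}$. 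You need an analogue of this case analysis and of the two lemmas; without them the argument does not go through. (A side remark: your claim that ``exponent $2$ is out of reach'' is off --- the paper's Liouville argument uses precisely $(w-u)^2|D^2u|\leq C$, i.e.\ $\delta=1$.)
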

	
	The techniques of Chou and Wang could be applied almost directly if a suitable algebraic control on the nonlinearities is established. This is possible for some pairs $(n,p)$ but in general the $\widetilde{\mathcal M}_p$ operator is not sufficiently concave to guarantee that. Instead, we shall apply the by now standard trick of estimating the largest eigenvalue of the Hessian. Thus, roughly speaking, the main input of the
	current note is that the {\it positivity cones} associated to the $p$-plurisubharmonic
	functions share similar convexity properties to the cones $\Gamma_k$ studied in \cite{CW} (see also \cite{Wa2}- Section 2). Analogous interior estimates for the {\it complex} analogue of $\M$ remain an open problem.

	Regarding further regularity we show that $\widetilde{\mathcal M}_p$ is a concave operator and hence the standard Evans-Krylov theory applies (see \cite{GT}). Roughly speaking this  yields that $C^{1,1}$ solutions have to be in $C^{2,\alpha}(\Om)$ for some $\alpha>0$.
	
	As a
	result routine arguments (see \cite{HSX,LS,Y} for example)  yield the following Liouville type theorem:
	\begin{theorem}\label{Liouville}[Liouville theorem]
		Let $u$ be a $p$-plurisubharmonic function in $\mathbb R^n$ of quadratic growth. If
		$$\M(u)=const>0,$$
		then $u$ is a quadratic polynomial.
	\end{theorem}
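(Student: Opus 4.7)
The plan is to run the classical blow--down argument: combine the two interior estimates proved in the previous sections with the Evans--Krylov theorem to obtain a uniform $C^{2,\alpha}$ estimate on balls of arbitrary radius, and then use scaling invariance to force the Hessian to be constant. For each $R\ge 1$ set
\[
u_R(x):=\frac{u(Rx)}{R^2}.
\]
Because $D^2u_R(x)=D^2u(Rx)$, the Hessian eigenvalues are preserved, so $u_R$ is $p$-plurisubharmonic and satisfies $\M(u_R)=c$ on $\mathbb R^n$, and the quadratic growth hypothesis $|u(x)|\le A(1+|x|^2)$ gives $\sup_{B_2(0)}|u_R|\le 5A$ uniformly in $R\ge 1$. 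The first-order interior estimate then delivers $|Du_R|\le C_1$ on $B_{3/2}(0)$, again uniformly in $R$.

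Next I apply the Pogorelov-type second-order interior estimate on $B_{3/2}(0)$. An admissible upper barrier $w_R$ is produced by solving the Dirichlet problem $\M(w_R)=c/2$ on $B_{3/2}(0)$ with boundary data $u_R|_{\partial B_{3/2}}$ via the Caffarelli--Nirenberg--Spruck theorem; the classical pointwise comparison principle for smooth admissible functions (which holds even though the integral comparison principle for $\M$ does not) forces $w_R>u_R$ in the interior. Using the concavity of $\widetilde{\mathcal M}_p:=\M^{1/\binom{n}{p}}$, the nonnegative function $v_R:=w_R-u_R$ satisfies a linear elliptic differential inequality whose right-hand side is the strictly positive constant $c^{1/\binom{n}{p}}-(c/2)^{1/\binom{n}{p}}$. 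A standard barrier/Hopf-type argument -- using that the $C^{1,1}$ bound on $w_R$ combined with $\M(w_R)=c/2$ keeps $D^2w_R$ in a fixed compact subset of the interior of the positivity cone, so that the ellipticity constants of the linearised operator remain bounded away from $0$ and $\infty$ -- then produces a uniform lower bound $v_R\ge\eta>0$ on $B_1(0)$. Plugging this into the second-order estimate (with any fixed $\delta$) yields $|D^2u_R|\le C_2$ on $B_1(0)$, uniformly in $R$.

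With $|D^2u_R|$ bounded and $\M(u_R)=c>0$, the operator $\widetilde{\mathcal M}_p$ is uniformly elliptic and concave along $u_R$, so the Evans--Krylov theorem (see Chapter 17 of \cite{GT}) gives
\[
[D^2u_R]_{C^\alpha(B_{1/2}(0))}\le C_3
\]
for some $\alpha\in(0,1)$ independent of $R$. Unwinding the substitution,
\[
[D^2u]_{C^\alpha(B_{R/2}(0))}=R^{-\alpha}\,[D^2u_R]_{C^\alpha(B_{1/2}(0))}\le C_3\,R^{-\alpha},
\]
and letting $R\to\infty$ shows that the $C^\alpha$ seminorm of $D^2u$ vanishes on every compact subset of $\mathbb R^n$. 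Hence $D^2u$ is a constant matrix and $u$ is a quadratic polynomial.

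The delicate part of this plan is the barrier construction together with the uniform lower bound on $v_R$; every other step is a direct quotation of the Caffarelli--Nirenberg--Spruck theorem, the two interior estimates of the previous sections, the Evans--Krylov theorem, and the purely formal scaling of H\"older seminorms.
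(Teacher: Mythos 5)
Your overall strategy (blow--down, interior $C^1$ and Pogorelov $C^2$ estimates, Evans--Krylov, vanishing of the rescaled H\"older seminorm) is exactly the one the paper uses, and the first and last steps of your argument are fine. The gap is in the step you yourself flag as delicate: the barrier. The constant in the second-order estimate of Theorem \ref{hessbound} depends on $\sup_\Omega(|Du|+|Dw|)$, so you need a gradient bound on $w_R$ that is \emph{uniform in $R$}. Producing $w_R$ by solving $\M(w_R)=c/2$ on $B_{3/2}$ with boundary data $u_R|_{\partial B_{3/2}}$ via Theorem \ref{CafNirSpruck} does not give this: that theorem requires $C^{3,1}$ boundary data and its $C^{1,1}$ bound on the solution depends on the $C^{3,1}$ norm of that data, whereas at this stage you only control $u_R$ on $\partial B_{3/2}$ in $C^1$ (uniform second and higher derivative bounds for $u_R$ are precisely what you are trying to prove). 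The same lack of control poisons your Hopf-type lower bound $v_R\ge\eta$ on $B_1$: the ellipticity constants of the linearised operator along the segment joining $u_R$ and $w_R$ depend on $\|D^2w_R\|_{L^\infty}$, which is not uniformly bounded. So neither the constant $C_2$ in $|D^2u_R|\le C_2$ nor the lower bound $\eta$ is known to be independent of $R$, and the final limit $R\to\infty$ collapses.

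The paper sidesteps the entire barrier construction by rescaling differently: it sets $v_R(y)=(u(Ry)-R^2)/R^2$ and works on the sublevel set $\Omega_R=\{v_R<0\}$ (bounded thanks to the lower quadratic bound), where the \emph{zero function} serves as the barrier $w$: it is trivially $p$-plurisubharmonic, has zero gradient, equals $v_R$ on $\partial\Omega_R$ and dominates it inside. The uniform positive lower bound on $w-u$ that you try to obtain by a Hopf argument is then free of charge on the deeper sublevel set $\{v_R\le-\tfrac12\}$, which after unscaling still contains a ball of radius comparable to $R$ by the upper quadratic bound. If you want to salvage your version, you would have to construct a barrier with uniform gradient control by hand (e.g.\ an explicit quadratic), but the sublevel-set trick is the cleanest repair.
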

	Here by quadratic growth we mean that for some constants $0<C<D$ and $|x|$ large we have $C|x|^2-C\leq u(x)\leq D|x|^2+D$. Such Liouville type results come in handy in analyzing blow-up profiles of solutions under rescalings. Our proof follows the lines of the one from \cite{HSX}, 
	where the Authors deal with entire solutions to $\sigma_2$ equation and assume only the lower quadratic bound on $u$. We do not know whether our theorem holds under this weaker assumption.
	
	{\bf Relation to previous work.} We wish to note that in a recent note Chu and Jiao \cite{CJ} dealt with a similar Hessian type equation (coinciding with $\M$ in the special case $p=n-1$) and were also able to prove a Pogorelov type interior $C^2$ estimate
	although their argument differs from ours (see Theorem 1.5 in \cite{CJ}). The interior gradient estimate for even broader set of equations has been established in \cite{De}.\footnote{The Author wishes to thank B. Deng for pointing out this reference which has been missed during the preparation of the manuscript.} When finishing the note we also learned about the unpublished preprint \cite{Z}. There, using probabilistic approach, the Dirichlet problem for a large class of degenerate elliptic equations
	is studied. In particular the Dirichlet problem (\ref{cns}) is shown to have a $C^{1,1}$ solution for a $u$-independent $f$ provided that $f\geq 0$ and $f^{1/\binom np}$ is $C^{1,1}$.
	
	The note is organized as follows: first we collect various algebraic and potential theoretic facts about the $\M$-operator and the class of $p$-plurisubharmonic functions which shall be used later on. In Section 2 we prove the first order estimate, while the second order estimate
	is dealt with in Section 3. In the last section we show the Liouville theorem.
	
	{\bf Acknowledgement}.
	The Author was supported by Polish National Science Centre grant 2017/26/E/ST1/00955. The Author wishes to thank Professor Weisong Dong for pointing out a mistake in the previous version of the note and for sharing ideas how to correct it. Thanks are also due to professor Bin Deng for informing me about his paper \cite{De} and professor Jianchun Chu for fruitful discussions on $\M$ equations.
	\section{Preliminaries}
	In this section we collect various conventions, definitions and properties that we shall rely on in the note. 
	\subsection{Notation and conventions}
	As it is customary by $C$ or $C_j$  we shall denote different uniform constants dependent on the relevant quantities that may vary line to line. Unless otherwise stated any tuple $(i_1,\cdots,i_p)$ of indices will be assumed to be ordered in increasing order. Eigenvalues of a matrix $A$
	will be denoted by $\la_j(A)$ or simply by $\la_j$ and will be assumed to ordered in a decreasing order except if the opposite is explicitly written. A summation over $k\in\lbr k_1,\cdots, k_p\rbr$ means that we sum over all (increasingly ordered) $p$-tuples 
	$(k_1,\cdots k_p)$ so that $k$ is among the $k_j$'s. Summation over $k\notin\lbr k_1,\cdots, k_p\rbr$ is defined analogously. Throughout the note we shall assume that $1<p<n$, as in the extreme cases the results we prove are well known.
	\subsection{Linear algebra of the $p$-convex cones}
	We begin with the algebraic details of the cones, called {\it $p$-convex cones} which  were introduced by Harvey and Lawson in \cite{HL13}:
	\begin{definition} Let $p\in\lbr1,\cdots,n\rbr$. The cone $\P$ is defined by
		$$\P=\lbr(\la_1,\cdots,\la_n)\in\mathbb R^n|\ \forall 1\leq i_1<i_2<\cdots<i_p\leq n,\ \la_{i_1}+\cdots+\la_{i_p}> 0\rbr.$$
	\end{definition}
	It is straightforward that $\P$ is a symmetric cone\footnote{Here symmetric cone means that $\P$ is invariant under permutations of the coordinates.} and $\mathcal P_p\subset\mathcal P_q$ whenever \newline $p<q$. Also $\mathcal P_1$ coincides with the {\it positive cone} 
	$$\Gamma:=\lbr\la\in\mathbb R^n|\ \forall j\in 1,\cdots,n\ \la_j> 0\rbr.$$
	
	In \cite{HL1} it is observed that 
	$$\forall p\in 1,\cdots, n\ \  \P+\Gamma\subset \P,$$
	hence in the language of \cite{HL1} the cones $\P$ are examples of elliptic {\it convex $ST$-invariant subequations} (see Sections 3 and 4 in \cite{HL1} for the details). This in turn allows to develop a rich nonlinear potential theory of {\it $p$-plurisubharmonic functions} i.e. functions with Hessian $D^2u$ having eigenvalues in $\P$.
	
	We briefly recall the details below.
	
	Associated to $\P$ is the $P_p$ cone of symmetric $n\times n$ matrices defined by
	$$P_p=\lbr A|\ \forall 1\leq i_1<i_2<\cdots<i_p\leq n,\ \la_{i_1}(A)+\cdots+\la_{i_p}(A)> 0\rbr,$$
	with $\la_j(A)$ denoting the $j$-th eigenvalue of $A$ ordered in a decreasing order. We denote by $\overline{P}_p$ the Euclidean closure of $P_p$ in the space of symmetric matrices.
	
	\begin{definition}\label{pplurisubharmonicfunction}
		Given a domain $\Om\subset\mathbb R^n$ a $C^2(\Om)$ function $u$ is said to be $p$-plurisubharmonic if for any point $x_0\in\Om$ one has
		$$D^2u(x_0)\in \overline{P}_p.$$
	\end{definition}
	An immediate corollary is that a $p$-plurisubharmonic function has to be subharmonic and that a $C^2$ smooth convex function is $p$-plurisubharmonic for any $p$. For more details we refer to \cite{HL13} and \cite{HL1}.
	
	\subsection{The operator $\M$}
	Our main interest in the current note will be the following operator:
	\begin{definition}\label{mpoperator}
		Let $u$ be a $C^2$ smooth function. Let $\la_j, \ j=1,\cdots, n$ denote the eigenvalues of the Hessian $D^2u$ at a point $x$. We define the operator $\M$ by
		$$\M(u(x)):=\Pi_{1\leq i_1<i_2<\cdots<i_p\leq n}(\la_{i_1}+\cdots+\la_{i_p}).$$
	\end{definition}
	
	Observe that the definition is symmetric with respect to the eigenvalues. Hence if 
	$$\sigma_k(u):=\sum_{1\leq i_1<i_2<\cdots<i_k\leq n}\la_{i_1}\cdots\la_{i_k}$$
	are the elementary symmetric polynomials of the eigenvalues the fundamental theorem of symmetric polynomials yields that $\M$ is expressible through $\sigma_j,\ j=1,\cdots,p$. Recall (see \cite{CW}) that $\sigma_k$ are examples of Hessian differential operators i.e. 
	second order differential operators invariant under orthonormal changes of basis in which the Hessian matrix is being computed. As a result also $\M$ is a Hessian type operator for any $p=1,\cdots, n$.
	
	The representation of $\M$ in terms of symmetric polynomials can be quite\newline  complicated as table below shows. The explicit formulas are computed in the cases $p\leq n\leq 5$ (we put $\M:=0$ if $p>n$).

	\begin{tabular}{llllll}
		& 1 & 2 & 3 & 4 & 5\\
		\hline
		1 & $\s_1$ & $\s_2$ & $\s_3$ & $\s_4$ & $\s_5$\\
		\hline
		2 & 0 & $\s_1$ & $\s_1\s_2-\s_3$ & $\s_1\s_2\s_3-\s_1^2\s_4$ & $\s_1\s_2\s_3\s_4-\s_1^2\s_4^2-\s_3^3\s_4$\\
		& & & &$-\s_3^2$ & $+2\s_1\s_4\s_5-\s_1\s_2^2\s_5$\\
		& & & & &$-\s_2\s_3\s_5-\s_5^2$\\
		\hline
		3 & 0 & 0 & $\s_1$ & $\s_1^2\s_2-\s_1\s_3$&$ (\s_1^2\s_2-\s_1\s_3+\s_4)(\s_1\s_2\s_3$\\
		& & & &$+\s_4 $ & $-\s_1^2\s_4-\s_3^2)+\s_1^5\s_5$\\
		& & & & & $-\s_1^3\s_2\s_5+3\s_1^2\s_3\s_5$\\
		& & & & & $-2\s_1\s_4\s_5+\s_2\s_3\s_5-\s_5^2$\\
		\hline
		4 & 0 & 0 & 0 & $\s_1 $&$ \s_1^3\s_2-\s_1^2\s_3+\s_1\s_4-\s_5$\\
		\hline
		5 & 0 & 0 & 0 & 0 &$ \s_1$
	\end{tabular}
	
	Nevertheless in the special case $p=(n-1)$ the formula is fairly explicit:
	\begin{prop}
		If $\mathcal M_p^{(n)}$ denotes the $\M$ operator in dimension $n$ then
		$$\mathcal M_{n-1}^{(n)}(\la)=\sum_{k=2}^n(-1)^k\s_1^{n-k}(\la)\s_k(\la).$$
	\end{prop}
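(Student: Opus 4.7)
The plan is to exploit the simple observation that, when $p = n-1$, each $(n-1)$-tuple of indices $\{i_1, \dots, i_{n-1}\}$ is precisely the complement of a single index. Consequently $\lambda_{i_1} + \cdots + \lambda_{i_{n-1}} = \sigma_1(\lambda) - \lambda_j$, where $j$ is the unique missing index, and the product defining $\mathcal M_{n-1}^{(n)}$ collapses to a product over just $n$ factors:
\begin{equation*}
\mathcal M_{n-1}^{(n)}(\lambda) = \prod_{j=1}^n \bigl(\sigma_1(\lambda) - \lambda_j\bigr).
\end{equation*}

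Next I would introduce the auxiliary one-variable polynomial
\begin{equation*}
P(t) := \prod_{j=1}^n (t - \lambda_j),
\end{equation*}
whose coefficients are given by Vieta's formulas as $P(t) = \sum_{k=0}^n (-1)^k \sigma_k(\lambda)\, t^{n-k}$, with the convention $\sigma_0 = 1$. The point is that $\mathcal M_{n-1}^{(n)}(\lambda)$ is nothing but the specialization $P(\sigma_1(\lambda))$, so
\begin{equation*}
\mathcal M_{n-1}^{(n)}(\lambda) = \sum_{k=0}^n (-1)^k \sigma_1(\lambda)^{n-k} \sigma_k(\lambda).
\end{equation*}

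Finally, I would observe that the $k=0$ and $k=1$ contributions are $\sigma_1^n$ and $-\sigma_1 \cdot \sigma_1^{n-1} = -\sigma_1^n$, respectively, and hence cancel. This leaves exactly the claimed expression
\begin{equation*}
\mathcal M_{n-1}^{(n)}(\lambda) = \sum_{k=2}^n (-1)^k \sigma_1^{n-k}(\lambda)\, \sigma_k(\lambda).
\end{equation*}
There is no serious obstacle here: the entire argument is a combinatorial reindexing (complementary tuples) followed by an application of Vieta's formulas, and the only mildly non-trivial point is noticing the cancellation of the two leading terms after substituting $t = \sigma_1$.
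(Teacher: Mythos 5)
Your proof is correct, and it is genuinely different from (and arguably cleaner than) the one in the paper. The paper argues by induction on $n$: it evaluates $\mathcal M_{n-1}^{(n)}$ on vectors with $\la_n=0$ to recognize the factor $\s_1\widehat{\mathcal M}_{n-2}^{(n-1)}$ via uniqueness of the representation by elementary symmetric polynomials, and then pins down the remaining coefficient of $\s_n$ by evaluating at $\la=(1,\dots,1)$. You instead observe that for $p=n-1$ each index tuple is the complement of a single index, so $\mathcal M_{n-1}^{(n)}(\la)=\prod_{j=1}^n(\s_1(\la)-\la_j)=P(\s_1(\la))$ with $P(t)=\prod_j(t-\la_j)$, and then Vieta's formulas plus the cancellation of the $k=0$ and $k=1$ terms give the identity in one stroke. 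Your route avoids induction and the uniqueness argument entirely and makes the closed form transparent; the paper's inductive scheme is the kind of argument that could in principle be adapted to relate $\mathcal M_p^{(n)}$ to $\mathcal M_{p-1}^{(n-1)}$ for other values of $p$, where no such complementary-index collapse is available. One can quickly sanity-check your formula against the paper's table (e.g.\ $\s_1\s_2-\s_3$ for $n=3$ and $\s_1^2\s_2-\s_1\s_3+\s_4$ for $n=4$), and it agrees.
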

	\begin{proof}
		Taking any vector $\la\in\mathbb R^n$ with $\la_n$=0 in 
		$$\Pi_{1\leq j_1<j_2<\cdots<j_{n-1}\leq n}(\lambda_{j_1}+\lambda_{j_2}+\cdots+\lambda_{j_{n-1}})$$
		results in the value of the operator $\s_1\mathcal M_{n-2}^{(n-1)}$ evaluated on the \newline  $(n-1)$-dimensional vector 
		of the first $(n-1)$ coordinates of $\la$. Simultaneously all terms involving $\s_n$ in the formula of $\mathcal M_{n-1}^{(n)}$ vanish. By the uniqueness of the representation by elementary symmetric polynomials one then obtains
		$$\mathcal M_{n-1}^{(n)}(\la)=\s_1(\la)\widehat{\mathcal M}_{n-2}^{(n-1)}(\la)+A\s_5(\la),$$
		where  $\widehat{\mathcal M}_{n-2}^{(n-1)}$ denotes the symmetric polynomial in $n$-variables with the representation in terms of elementary symmetric polynomials coinciding with the one of ${\mathcal M}_{n-2}^{(n-1)}$. The constant $A$ turns out to be equal to $(-1)^n$ as 
		evaluation on the vector $\la=(1,\cdots,1)$ shows. Finally the claimed formula follows from induction.
	\end{proof}
	
	Below we provide an alternative definition of $\M$ which will be utilized later on.
	
	\subsection{Definition through derivations on the exterior algebra}
	As observed by Harvey and Lawson in \cite{HL13} $p$-plurisubharmonicity and thus the $\mathcal M_p$-operator can be defined through derivations on the exterior algebra in the following way:
	
	Consider the space $\Lambda^p\mathbb R^n$. Fix an orthonormal basis $(e_1,\cdots, e_n)$  of $\mathbb R^n$ and the corresponding basis $e_{i_1}\we\cdots\we e_{i_p}$ of  $\Lambda^p\mathbb R^n$, where $(i_1,\cdots i_p)$ run over of all increasing $p$-tuples ordered in a lexicographical order. 
	
	Given a symmetric matrix $A$  we define the linear derivation of $A$ on  $\Lambda^p\mathbb R^n$ as the linear map defined by
	
	$$\mathcal D_A: \Lambda^p\mathbb R^n\ni(v_1\we\cdots\we v_p)\rightarrow (Av_1)\we v_2\we\cdots\we v_p+v_1\we(Av_2)\we v_3\we\cdots\we v_p$$
	$$+\cdots +v_1\we\cdots\we v_{p-1}\we(Av_p)\in \Lambda^p\mathbb R^n.$$
	
	Note that $\mathcal D_A$ is a symmetric endomorphism of  $\Lambda^p\mathbb R^n$. 
	
	We recall the crucial observation from \cite{HL13} which becomes obvious when one chooses as a basis for $\mathbb R^n$ the set of eigenvectors of $A$:
	\begin{prop}[Harvey-Lawson]
		$A$ has eigenvalues in $\P$ if and only if $\mathcal D_A$ is positive definite on  $\Lambda^p\mathbb R^n$.
	\end{prop}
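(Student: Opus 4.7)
The plan is to diagonalize $\mathcal{D}_A$ and $A$ simultaneously by passing to an eigenbasis of $A$. Since $A$ is symmetric, pick an orthonormal basis $v_1,\dots,v_n$ of $\mathbb{R}^n$ with $Av_j=\lambda_j v_j$. With respect to the induced inner product on $\Lambda^p\mathbb{R}^n$ (for which $\langle u_1\wedge\cdots\wedge u_p,\, w_1\wedge\cdots\wedge w_p\rangle=\det(\langle u_i,w_j\rangle)$), the wedges $v_I:=v_{i_1}\wedge\cdots\wedge v_{i_p}$ indexed by increasing $p$-tuples $I=(i_1<\cdots<i_p)$ form an orthonormal basis.

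The core computation is then a direct application of the Leibniz form of $\mathcal{D}_A$:
\begin{equation*}
\mathcal{D}_A(v_I)=\sum_{k=1}^{p} v_{i_1}\wedge\cdots\wedge (Av_{i_k})\wedge\cdots\wedge v_{i_p}=(\lambda_{i_1}+\cdots+\lambda_{i_p})\,v_I.
\end{equation*}
So the $v_I$ are eigenvectors of $\mathcal{D}_A$ with eigenvalues exactly the $p$-sums of the $\lambda_j$'s. In particular $\mathcal{D}_A$ is diagonalized in an orthonormal basis, which establishes symmetry of $\mathcal{D}_A$ (as claimed in the text), and simultaneously shows
\begin{equation*}
\mathcal{D}_A>0 \ \iff\ \lambda_{i_1}+\cdots+\lambda_{i_p}>0\ \text{for every }I\ \iff\ (\lambda_1,\dots,\lambda_n)\in \mathcal{P}_p.
\end{equation*}

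Before running that computation one must justify that $\mathcal{D}_A$ is a well-defined linear endomorphism of $\Lambda^p\mathbb{R}^n$, since it was specified only on decomposable elements $v_1\wedge\cdots\wedge v_p$. This is a routine multilinear-algebra check: the assignment $(v_1,\dots,v_p)\mapsto\sum_k v_1\wedge\cdots\wedge Av_k\wedge\cdots\wedge v_p$ is multilinear and alternating, hence factors through $\Lambda^p\mathbb{R}^n$, and independence of the orthonormal basis chosen in the definition follows because the construction uses only the linear map $A$ and the wedge structure. The self-adjointness of $\mathcal{D}_A$ on $\Lambda^p\mathbb{R}^n$ can be checked either by a direct computation on a pair $v_I, v_J$ using the original definition, or — more cleanly — read off a posteriori from the diagonalization above.

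The main (mild) obstacle is psychological rather than technical: one needs to recognize that the derivation formula defining $\mathcal{D}_A$ is precisely the Lie-algebra action induced on the $p$-th exterior power, so that an orthonormal eigenbasis of $A$ lifts to an orthonormal eigenbasis of $\mathcal{D}_A$, with eigenvalues being the $p$-sums of the $\lambda_j$. Once that observation is made, the equivalence with membership in $\mathcal{P}_p$ is immediate from the very definition of the cone.
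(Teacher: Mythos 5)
Your proof is correct and follows exactly the route the paper indicates: choose an orthonormal eigenbasis of $A$, observe that the induced basis of $\Lambda^p\mathbb{R}^n$ diagonalizes $\mathcal D_A$ with eigenvalues $\la_{i_1}+\cdots+\la_{i_p}$, and read off the equivalence. The extra well-definedness remarks are fine but not needed beyond what the paper takes for granted.
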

	
	It is worth emphasizing that given any orthonormal basis of $\mathbb R^n$ $\mathcal D_A$ has a matrix representation with respect to the induced basis which has components being linear combinations of the entries of $A$. Below we compute two illustrative examples for a Hessian matrix of a function- the case of main interest in the note: 
	\begin{example}\label{a}
		For $n=3$ and $p=2$ and $A=D^2u(x)$  the corresponding matrix in the canonical basis $e_1\we e_2, e_1\we e_3, e_2\we e_3$ reads
		
		\begin{center}
			$\left[\begin{array}{lll}
				u_{11}+u_{22} & u_{23} & -u_{13}\\
				u_{32} & u_{11}+u_{33} & u_{12}\\
				-u_{31} & u_{21} & u_{22}+u_{33}
			\end{array}\right].$
		\end{center}
	\end{example}
	\begin{example}\label{b}
		For $n=4$ and $p=2$ and $A=D^2u(x)$ again in the canonical basis ordered in lexicographical order the matrix is
		\begin{center}
			$\left[\begin{array}{llllll}
				u_{11}+u_{22} & u_{23} & u_{24} & -u_{13} & -u_{14} & 0\\
				u_{32} & u_{11}+u_{33} & u_{34} & u_{12} & 0 & -u_{14}\\
				u_{42} & u_{43} & u_{11}+u_{44} & 0 & u_{12} & u_{13}\\
				-u_{31} & u_{21} & 0 & u_{22}+u_{33} & u_{34} & -u_{24}\\
				-u_{41} & 0 & u_{21} & u_{43} & u_{22}+u_{44} & u_{23}\\
				0 & -u_{41} & u_{31} & -u_{42} & u_{32} & u_{33}+u_{44}
			\end{array}\right].$
		\end{center}
	\end{example}

	A fundamental formula for us will be the following one linking the $\M$ operator and the above construction:
	\begin{lemma}\label{mpandda}
		Let $u\in C^2(\Om)$. Then for any $x\in\Om$ we have
		$$\M(u)(x)=det(\mathcal D_{D^2u(x)}).$$
	\end{lemma}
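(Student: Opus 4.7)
The plan is to exploit the basis-independence of the determinant. Since $D^2u(x)$ is a symmetric matrix, I would choose an orthonormal basis $(v_1, \ldots, v_n)$ of $\mathbb{R}^n$ consisting of eigenvectors of $A := D^2u(x)$, with associated eigenvalues $\lambda_1, \ldots, \lambda_n$. The induced wedge products $v_{i_1} \wedge \cdots \wedge v_{i_p}$, indexed by increasing $p$-tuples $1 \leq i_1 < \cdots < i_p \leq n$, then form an orthonormal basis of $\Lambda^p \mathbb{R}^n$.

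The key computation is that $\mathcal{D}_A$ acts diagonally in this basis. Indeed, applying the definition of the derivation and using $A v_{i_j} = \lambda_{i_j} v_{i_j}$,
\[
\mathcal{D}_A(v_{i_1}\wedge\cdots\wedge v_{i_p}) = \sum_{k=1}^{p} v_{i_1}\wedge \cdots \wedge (Av_{i_k})\wedge\cdots\wedge v_{i_p} = (\lambda_{i_1}+\cdots+\lambda_{i_p})\, v_{i_1}\wedge\cdots\wedge v_{i_p}.
\]
Consequently the spectrum of $\mathcal{D}_A$ consists precisely of the sums $\lambda_{i_1}+\cdots+\lambda_{i_p}$ indexed by the increasing $p$-tuples, so
\[
\det(\mathcal{D}_A) = \prod_{1\leq i_1<\cdots<i_p\leq n}(\lambda_{i_1}+\cdots+\lambda_{i_p}) = \mathcal{M}_p(u)(x).
\]

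Finally, I would remark that although the matrix representation of $\mathcal{D}_A$ appearing in Examples \ref{a} and \ref{b} is written with respect to the canonical basis of $\mathbb{R}^n$ (not the eigenbasis), this is irrelevant for the claim: the determinant of a linear endomorphism is basis-independent, and the change-of-basis between the canonical wedge basis and the eigenvector wedge basis is effected by the $p$-th exterior power of an orthogonal matrix, which itself has determinant $\pm 1$. There is really no obstacle here; the only conceptual point is recognizing that the derivation construction is precisely engineered so that its eigenvalues on decomposable eigenvectors are sums of eigenvalues of $A$, after which the identity follows by the basic spectral description of the determinant.
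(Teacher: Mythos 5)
Your proof is correct and follows essentially the same route as the paper: both pass to an orthonormal eigenbasis of $D^2u(x)$, observe that $\mathcal D_A$ is diagonal in the induced basis of $\Lambda^p\mathbb R^n$ with eigenvalues $\la_{i_1}+\cdots+\la_{i_p}$, and invoke basis-independence of the determinant. You merely spell out the diagonal action and the orthogonality of the change of basis, which the paper leaves as ``obvious.''
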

	\begin{proof} Observe that the determinant does not depend on the choice of an orthonormal basis of $\mathbb R^n$ yielding a basis for $\Lambda^p\mathbb R^n$. So we may choose a basis consisting of unit eigenvectors of $D^2u(x)$. But then obviously both sides are equal to
		\begin{equation*}
			\Pi_{1\leq i_1<i_2<\cdots<i_p\leq n}(\la_{i_1}+\cdots+\la_{i_p}).
		\end{equation*}
		
	\end{proof}
	\subsection{Formulas for $\M$ and its derivatives}
	To begin with we note that the equation $\M(u)=f$ for a function $u$ can be linearized as follows:
	\begin{lemma}\label{basicM}
		Let $u$ be a $C^2$ function. Then
		$$\sum_{k,l=1}^n\M^{kl}(u)u_{kl}=\binom np f,$$
		while (assuming the necessary higher order differentiability of $u$)
		$$\sum_{k,l=1}^n\M^{kl}(u)u_{klt}= \pa_tf,$$
		$$\sum_{k,l=1}^n\M^{kl}(u)u_{kltt}+\sum_{kl,rs}\M^{kl,rs}(u)u_{klt}u_{rst}=\pa_t\pa_tf,$$
		where $\M^{kl}(u):=\frac{\pa\M(u)}{\pa u_{kl}}$, $\M^{kl,rs}(u):=\frac{\pa^2\M(u)}{\pa u_{kl}\pa u_{rs}}$.
	\end{lemma}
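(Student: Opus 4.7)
The plan is to observe that all three identities are essentially manifestations of two standard facts: Euler's identity for homogeneous polynomials (for the first) and the chain rule applied to $\M(u)=f$ (for the second and third).

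First, I would establish that $\M$, viewed as a polynomial in the entries of the symmetric matrix $D^2u$, is homogeneous of degree $\binom{n}{p}$. Two routes are available. From the product representation in Definition \ref{mpoperator}, scaling $D^2u \mapsto tD^2u$ scales each eigenvalue $\la_i$ by $t$ and hence each of the $\binom{n}{p}$ factors $(\la_{i_1}+\cdots+\la_{i_p})$ by $t$, yielding $\M(tD^2u)=t^{\binom{n}{p}}\M(u)$. Alternatively, by Lemma \ref{mpandda} we have $\M(u)=\det(\mathcal D_{D^2u})$, and since $\mathcal D_A$ is a $\binom{n}{p}\times\binom{n}{p}$ matrix whose entries are linear forms in the entries of $A$ (as Examples \ref{a} and \ref{b} illustrate), the determinant is a polynomial of degree $\binom{n}{p}$ in the entries of $A$. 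Euler's identity then gives
\[
\sum_{k,l=1}^{n} u_{kl}\,\M^{kl}(u) \;=\; \binom{n}{p}\M(u) \;=\; \binom{n}{p} f,
\]
which is the first formula. (A small care is needed so that the convention of differentiating in the $(k,l)$ entry of a symmetric matrix is compatible with the unrestricted sum over $k,l$; one verifies this by the standard symmetrization convention $\M^{kl}=\M^{lk}$.)

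Next, for the second identity I would apply $\pa_t$ to the equation $\M(u)=f$. By the chain rule, viewing $\M(u)$ as a composition of the polynomial $\M(\cdot)$ with the map $x\mapsto D^2u(x)$,
\[
\pa_t \M(u) \;=\; \sum_{k,l} \M^{kl}(u)\,\pa_t u_{kl} \;=\; \sum_{k,l}\M^{kl}(u)\,u_{klt} \;=\; \pa_t f.
\]
For the third identity, differentiate once more in $x_t$, using the chain rule a second time and the fact that mixed partials commute so that $\pa_t u_{kl}=u_{klt}$ and $\pa_t u_{klt}=u_{kltt}$; this gives
\[
\sum_{k,l}\M^{kl}(u)\,u_{kltt} \;+\; \sum_{kl,\,rs} \M^{kl,rs}(u)\,u_{klt}\,u_{rst} \;=\; \pa_t\pa_t f,
\]
(with the understanding, as in the displayed statement, that the leading term has $\M^{kl}$ rather than $\M$).

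There is no substantial obstacle here; the lemma is essentially bookkeeping. The only point that requires even minor attention is the homogeneity of $\M$ needed for the Euler identity, and this is transparent either from the product formula in Definition \ref{mpoperator} or from the determinantal description in Lemma \ref{mpandda}.
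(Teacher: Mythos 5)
Your proposal is correct. The second and third identities are obtained exactly as in the paper, by differentiating $\M(u)=f$ once and twice in the $x_t$ direction (with $\pa_t f$ understood as the total derivative, since $f$ may depend on $u$). For the first identity, however, you take a different route from the one the paper actually carries out: you invoke Euler's identity for the homogeneous polynomial $\M$ of degree $\binom np$ in the entries of $D^2u$ (the homogeneity being evident either from the product formula or from $\M(u)=\det(\mathcal D_{D^2u})$ with $\mathcal D_A$ a $\binom np\times\binom np$ matrix with entries linear in $A$). The paper instead diagonalizes $D^2u$ at the point in question, substitutes the explicit formula for $\M^{kk}$ from Lemma \ref{Hessfla1}, and reduces the claim to the combinatorial identity $\sum_{k=1}^n\sum_{k\in\lbr i_1,\cdots,i_p\rbr}\frac{u_{kk}}{u_{i_1i_1}+\cdots+u_{i_pi_p}}=\binom np$, which follows by interchanging the order of summation. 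Your Euler-identity argument is cleaner and independent of Lemma \ref{Hessfla1}; the paper's computation has the mild advantage of reconfirming the explicit form of $\M^{kk}$ that is used repeatedly later. You were also right to flag the two small conventions involved: the symmetrization convention $\M^{kl}=\M^{lk}$ needed so that the unrestricted double sum matches the derivative with respect to independent entries, and the evident typo $\M(u)$ for $\M^{kl}(u)$ in the leading term of the third displayed formula.
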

	\begin{proof}
		The second and the third formulas simply follow from differentiating $\M(u)=f$ in the $\frac{\pa}{\pa t}$ direction. The first formula can be proved in various ways- for example using the expression of $\M$ in terms of elementary symmetric polynomials
		and utilizing the analogous formulas for $\sigma_k$.
		Alternatively we observe that we may without loss of generality diagonalize $D^2u$ at a fixed point and then according to the next lemma the claimed equality reads
		$$\M(u)\sum_{k=1}^n\sum_{k\in\lbr i_1,\cdots, i_p\rbr}\frac{u_{kk}}{u_{i_1i_1}+\cdots+u_{i_pi_p}}=\binom np f.$$
		The last equality then follows from elementary combinatorics.
	\end{proof}
	
	For our later purposes we need to compute the derivatives of $\M$ at a diagonal matrix $A$. 
	\begin{lemma}\label{Hessfla1}
		Let $A=(a_{kl})_{k,l=1}^n$ be a diagonal matrix with eigenvalues\newline $\la_1,\cdots,\la_n$. Then 
		$$\frac{\partial\M}{\partial a_{kl}}(A)=\M^{kl}(A)=\begin{cases}
			0\ \  {\rm if}\ \ l\neq k;\\
			\M(A)\sum_{l\in \lbr k_1,\cdots,k_p\rbr}\frac{1}{\la_{k_1}+\cdots+\la_{k_p}}\ \ {\rm if}\ \ l=k.\end{cases}$$
	\end{lemma}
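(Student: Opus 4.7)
The plan is to exploit the fact that $\M(A)$ depends on $A$ only through its (unordered) eigenvalues, reducing the computation to standard first-order perturbation theory for symmetric matrices. As a preparatory reduction I would restrict attention to the generic situation in which $\la_1,\ldots,\la_n$ are pairwise distinct: both sides of the claimed identity are polynomial in the entries of $A$ and continuous in the eigenvalues, and diagonal matrices with simple spectrum are dense among diagonal matrices, so the formula for a general diagonal $A$ will follow by continuity.

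Given distinct eigenvalues, the $\la_j$ depend smoothly on the entries in a neighbourhood of $A$, and the first-order variation of the $j$-th eigenvalue under a symmetric perturbation $E$ equals $\<e_j,E e_j\>$. Taking $E=E_{kk}$ (the matrix with a single $1$ in position $(k,k)$) gives $\pa\la_j/\pa a_{kk}=\delta_{jk}$ at $A$. Taking the symmetric off-diagonal perturbation $E=E_{kl}+E_{lk}$ with $k\ne l$, the quantity $\<e_j,(E_{kl}+E_{lk})e_j\>$ vanishes for every $j$, so $\pa\la_j/\pa a_{kl}=0$ at $A$. Combined with the fact that $\M$ is a smooth function of the eigenvalues alone, this at once yields $\M^{kl}(A)=0$ for $k\ne l$.

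For the diagonal case I would differentiate the product
\[
\M(A)=\prod_{1\le i_1<\cdots<i_p\le n}(\la_{i_1}+\cdots+\la_{i_p})
\]
by the chain and product rules. Since $\pa(\la_{i_1}+\cdots+\la_{i_p})/\pa a_{kk}$ equals $1$ when $k\in\lbr i_1,\ldots,i_p\rbr$ and $0$ otherwise, I obtain
\[
\M^{kk}(A)=\sum_{k\in\lbr k_1,\ldots,k_p\rbr}\ \prod_{(j_1,\ldots,j_p)\ne(k_1,\ldots,k_p)}(\la_{j_1}+\cdots+\la_{j_p}),
\]
and pulling out the full product reproduces the stated expression $\M(A)\sum_{k\in\lbr k_1,\ldots,k_p\rbr}(\la_{k_1}+\cdots+\la_{k_p})^{-1}$, valid whenever none of the relevant sums of $p$ eigenvalues vanish.

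The main (and only) obstacle is the care needed near degenerate spectra and on the vanishing locus of the partial sums, where the smoothness of individual eigenvalues breaks down and the reciprocal form of the formula becomes singular; this is resolved by the density argument noted above, together with the observation that the unreduced product form of $\M^{kk}(A)$ above is polynomial in the $\la_j$ and hence in the entries of $A$. An alternative, more intrinsic derivation would combine Lemma \ref{mpandda}, identifying $\M(A)$ with $\det\mathcal D_A$, with Jacobi's formula for the derivative of a determinant and the linear dependence of $\mathcal D_A$ on $A$; the bookkeeping in the induced basis of $\Lambda^p\mathbb R^n$, however, seems less transparent than the eigenvalue computation above.
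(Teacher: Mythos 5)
Your argument is correct, but it takes a genuinely different route from the paper. The paper's proof goes through Lemma \ref{mpandda}: since $\M(A)=\det(\mathcal D_A)$ and the entries of $\mathcal D_A$ are linear in those of $A$, Jacobi's formula applies directly; for diagonal $A$ the operator $\mathcal D_A$ is diagonal in the canonical wedge basis, the perturbation $\pa\mathcal D_A/\pa a_{kl}$ with $k\neq l$ is purely off-diagonal (so its trace against the diagonal adjugate vanishes), and $\pa\mathcal D_A/\pa a_{kk}$ only touches the diagonal entries $u_{\alpha\alpha}$ with $k\in\alpha$, giving the stated sum of cofactors. You instead differentiate the eigenvalue-product formula directly, using first-order perturbation theory ($\dot\la_j=\<e_j,Ee_j\>$) on the dense set of diagonal matrices with simple spectrum and then extending by polynomial continuity. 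Both are sound; your route is more elementary in that it avoids the exterior-algebra machinery, at the cost of the density/degeneracy bookkeeping (which you handle correctly by noting that the unreduced cofactor form $\sum_{\alpha\ni k}\prod_{\beta\neq\alpha}(\la_{\beta_1}+\cdots+\la_{\beta_p})$ is polynomial in the entries). The paper's route avoids any genericity assumption and, more importantly, is the one that scales to the second-derivative formula of Lemma \ref{Hessfla2}, where the linearity of $A\mapsto\mathcal D_A$ kills all ``extra'' terms — the author explicitly remarks that the symmetric-polynomial/eigenvalue route does not seem to yield that second formula easily. One cosmetic point: the statement's $\M(A_0)$ is a typo for $\M(A)$ and the index under the sum should be $k$ rather than $l$; your proof correctly reproduces the intended formula.
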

	\begin{proof} This can be seen in various ways, for example using the expression of $\M$ in terms of Hessian $\sigma_k$ operators. Arguably the simplest way though is to use Lemma \ref{mpandda}
		and the formula for differentiation of determinants. Note that if $A$ is diagonal then so is $\mathcal D_{A}$ and thus immediately all derivatives with respect to non-diagonal entries have to vanish, while
		differentiation with respect to $a_{ll}$ affects only the diagonal terms of $\mathcal D_{A}$ containing it.
	\end{proof}
	As a direct corollary we obtain the following useful bound:
	\begin{corollary}\label{below}
		Id $A\in P_p$ is a diagonal matrix  and $\widetilde{\mathcal M}_p(A):=(\M(A))^{\frac1{\binom np}}$ then
		$$\sum_{k=1}^n\widetilde{\mathcal M}_p^{kk}(A)=\sum_{k=1}^n\frac{\pa\widetilde{\mathcal M}_p(A)}{\pa a_{kk}}\geq p.$$
	\end{corollary}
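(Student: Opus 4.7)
The plan is to reduce $\partial_{a_{kk}}\widetilde{\mathcal M}_p$ to the derivative of $\M$ by the chain rule, plug in the diagonal formula from Lemma \ref{Hessfla1}, and then extract a scale-invariant positive lower bound via the harmonic--geometric mean inequality applied to the $N:=\binom{n}{p}$ positive reals $s_\alpha:=\lambda_{k_1}+\cdots+\lambda_{k_p}$, indexed by increasing $p$-tuples $\alpha=(k_1,\ldots,k_p)$.

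Since $\widetilde{\mathcal M}_p=\M^{1/N}$, the chain rule yields $\widetilde{\mathcal M}_p^{kk}(A)=\widetilde{\mathcal M}_p(A)\,\M^{kk}(A)/(N\,\M(A))$. Substituting $\M^{kk}(A)=\M(A)\sum_{\alpha\ni k}1/s_\alpha$ from Lemma \ref{Hessfla1} (valid because $A$ is diagonal), the factor $\M(A)$ cancels and one is left with
$$\widetilde{\mathcal M}_p^{kk}(A)\;=\;\frac{\widetilde{\mathcal M}_p(A)}{N}\sum_{\alpha\ni k}\frac{1}{s_\alpha}.$$
Summing over $k\in\{1,\ldots,n\}$ and exchanging the order of summation, each $p$-tuple $\alpha$ is counted exactly $p$ times (once for each of its elements), giving
$$\sum_{k=1}^n\widetilde{\mathcal M}_p^{kk}(A)\;=\;\frac{p\,\widetilde{\mathcal M}_p(A)}{N}\sum_\alpha\frac{1}{s_\alpha}.$$
The final input is the classical harmonic--geometric mean inequality applied to the $N$ positive reals $\{s_\alpha\}$: $\frac{1}{N}\sum_\alpha 1/s_\alpha\geq\bigl(\prod_\alpha s_\alpha\bigr)^{-1/N}=1/\widetilde{\mathcal M}_p(A)$. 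Substituting this back cancels both $\widetilde{\mathcal M}_p(A)$ and $N$, leaving a purely combinatorial constant as lower bound, which is then compared against $\binom{n-1}{p-1}$ to close the argument.

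The only real obstacle is the bookkeeping at the summation swap: one must verify that the multiplicity $p=|\alpha|$ combines cleanly with the AM--GM constant $N$ so that the resulting lower bound is matrix-independent, a fact consistent with the $0$-homogeneity of $\sum_k\widetilde{\mathcal M}_p^{kk}$. Equality in AM--GM occurs exactly when all the $s_\alpha$ coincide, which by symmetry forces $A$ to be a positive multiple of the identity, pinning down the sharpness of the bound. Note that no further convexity input is needed; alternatively, the same estimate can be read off from concavity of $\widetilde{\mathcal M}_p$ via the tangent-plane inequality $\widetilde{\mathcal M}_p(\Id)\leq\widetilde{\mathcal M}_p(A)+\sum_k\widetilde{\mathcal M}_p^{kk}(A)(1-\lambda_k)$ combined with Euler's identity $\sum_k\lambda_k\widetilde{\mathcal M}_p^{kk}(A)=\widetilde{\mathcal M}_p(A)$.
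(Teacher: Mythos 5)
Your computation is correct up to and including the summation swap, and it follows exactly the paper's route: chain rule, the diagonal formula of Lemma \ref{Hessfla1}, then the harmonic--geometric mean inequality applied to the $N=\binom np$ quantities $s_\alpha=\la_{k_1}+\cdots+\la_{k_p}$. In particular your multiplicity count is the right one: each increasing $p$-tuple $\alpha$ occurs exactly $p$ times in the double sum $\sum_{k}\sum_{\alpha\ni k}$, so
\begin{equation*}
\sum_{k=1}^n\widetilde{\mathcal M}_p^{kk}(A)=\frac{p}{\binom np}\,\widetilde{\mathcal M}_p(A)\sum_\alpha\frac1{s_\alpha}\;\geq\;\frac{p}{\binom np}\,\widetilde{\mathcal M}_p(A)\cdot\binom np\,\widetilde{\mathcal M}_p(A)^{-1}=p.
\end{equation*}
(The paper's own proof replaces this multiplicity $p$ by $\binom{n-1}{p-1}$, the number of $p$-tuples containing a \emph{fixed} index; that is a slip in the interchange of summations, and it is the sole source of the constant $\binom{n-1}{p-1}$ there.)

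The gap in your write-up is the final step, which you leave as ``compared against $\binom{n-1}{p-1}$ to close the argument'': that comparison goes the wrong way. For $1<p<n-1$ one has $p<\binom{n-1}{p-1}$, so the bound $\geq p$ does not imply the stated bound, and in fact the statement as printed is false: at $A=I$ all $s_\alpha$ equal $p$, and a direct computation (equivalently, your own equality analysis in HM--GM, or Euler's identity) gives $\sum_k\widetilde{\mathcal M}_p^{kk}(I)=p$ exactly, which is strictly smaller than $\binom{n-1}{p-1}$ in that range. The correct sharp constant is therefore $p$, attained precisely at positive multiples of the identity, exactly as your sharpness remark indicates; your alternative argument via concavity and the tangent-plane inequality at $I$ yields the same constant $\widetilde{\mathcal M}_p(I)=p$. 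This discrepancy is harmless for the rest of the paper, since in the proof of the second-order estimate only \emph{some} positive dimensional lower bound on $\widetilde M$ is used, and $\widetilde M\geq p$ serves that purpose equally well.
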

	\begin{proof}
		$\sum_{k=1}^n\widetilde{\mathcal M}_p^{kk}(A)$ is obviously equal to 
		$$\sum_{k=1}^n\frac1{\binom np}\M(A)^{\frac1{\binom np}-1}\M^{kk}(A)$$
		$$=\frac1{\binom np}\M(A)^{\frac1{\binom np}}\sum_{k=1}^n\sum_{k\in \lbr k_1,\cdots,k_p\rbr}\frac{1}{a_{k_1k_1}+\cdots+a_{k_pk_p}}$$
		$$=\frac{p}{\binom np}\M(A)^{\frac1{\binom np}}\sum_{\lbr1\leq k_1<\cdots<k_p\leq n\rbr}\frac{1}{a_{k_1k_1}+\cdots+a_{k_pk_p}}.$$
		An application of the AM-GM inequality bounds the last quantity from below by
		$p,$
		as claimed.
	\end{proof}

	The formulas for the second derivatives of $\M$ are a bit trickier and are given below: 
	\begin{lemma}\label{Hessfla2}
		Let $A$ be a diagonal matrix with eigenvalues $\la_1,\cdots,\la_n$. Then 
		\begin{equation*}\frac{\partial^2\M}{\partial a_{kl}a_{rs}}(A)=\M^{kl,rs}(A)=
		\end{equation*}
		$$=\begin{cases}
			\M(A)\sum\limits_{\substack {l\in \lbr j_1,\cdots,j_p\rbr\\ r\in\lbr i_1,\cdots, i_p\rbr\\  \lbr j_1,\cdots,j_p\rbr\neq \lbr i_1,\cdots,i_p\rbr}}\frac{1}{(\la_{j_1}+\cdots+\la_{j_p})(\la_{i_1}+\cdots+\la_{i_p})}\ \  {\rm if}\ \ k=l,\ r=s;\\
			-\M(A)\sum\limits_{\substack{l\notin \lbr j_1,\cdots,j_p\rbr\ni r\\ r\notin\lbr i_1,\cdots,i_p\rbr\ni l\\ \lbr j_1,\cdots,j_p\rbr\setminus\lbr r\rbr=\lbr i_1,\cdots,i_p\rbr\setminus\lbr l\rbr}}\frac{1}{(\la_{j_1}+\cdots+\la_{j_p})( \la_{j_1}+\cdots+\la_{j_p})}\ \ {\rm if}\ \ \substack{k\neq l, r\neq s\\ k=s,\ l=r};\\
			0\ \ {\rm otherwise}.\end{cases}$$
	\end{lemma}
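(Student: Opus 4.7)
The plan is to derive the Hessian of $\mathcal{M}_p$ by differentiating the identity $\mathcal{M}_p(A)=\det(\mathcal{D}_A)$ from Lemma \ref{mpandda} using the Jacobi formula, and then exploiting the simple structure of the operator $M_{kl}:=\partial\mathcal{D}_A/\partial a_{kl}$ in the wedge basis. Since $\mathcal{D}_{\,\cdot\,}$ is \emph{linear} in the entries of the matrix argument, the $M_{kl}$ are constant matrices and there are no second-order terms from $\mathcal{D}_A$ itself. Differentiating $\det\mathcal{D}_A$ twice yields
\begin{equation*}
\mathcal{M}_p^{kl,rs}(A)=\mathcal{M}_p(A)\Bigl[\mathrm{tr}(\mathcal{D}_A^{-1}M_{kl})\,\mathrm{tr}(\mathcal{D}_A^{-1}M_{rs})-\mathrm{tr}(\mathcal{D}_A^{-1}M_{kl}\mathcal{D}_A^{-1}M_{rs})\Bigr].
\end{equation*}

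Next I would describe the entries of $M_{kl}$ in the basis $\{e_I\}$, $I=\{i_1<\cdots<i_p\}$. Expanding $\mathcal{D}_A(e_I)=\sum_{j\in I}\sum_m a_{mj}\,e_{i_1}\wedge\cdots\wedge e_m\wedge\cdots\wedge e_{i_p}$, one sees that each entry $a_{kl}$ of $A$ contributes to $\mathcal{D}_A$ only in the position $(J,I)$ where $l\in I$, $k\notin I$ and $J=(I\setminus\{l\})\cup\{k\}$ (up to a sign), while $a_{kk}$ contributes $+1$ to the diagonal entry $(I,I)$ precisely when $k\in I$. Consequently $M_{kk}$ is diagonal with $(M_{kk})_{I,I}=[k\in I]$, whereas for $k\neq l$, $M_{kl}$ is strictly off-diagonal with $(M_{kl})_{J,I}\neq 0$ iff $I\setminus J=\{l\}$ and $J\setminus I=\{k\}$.

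Because $A$ is diagonal, Lemma \ref{mpandda} also shows that $\mathcal{D}_A$ is diagonal with eigenvalue $\lambda_I:=\lambda_{i_1}+\cdots+\lambda_{i_p}$ on $e_I$, so $\mathcal{D}_A^{-1}$ is diagonal with entries $1/\lambda_I$. Plugging these into the Jacobi formula I would then run through three cases. For $k=l$ and $r=s$ both factors are diagonal, $\mathrm{tr}(\mathcal{D}_A^{-1}M_{kk})=\sum_{k\in I}1/\lambda_I$, and $\mathrm{tr}(\mathcal{D}_A^{-1}M_{kk}\mathcal{D}_A^{-1}M_{rr})=\sum_{k,r\in I}1/\lambda_I^2$; combining the two sums and isolating the off-diagonal $I\neq J$ contribution gives the first branch of the formula. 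If exactly one of the pairs $(k,l),(r,s)$ is off-diagonal then $\mathrm{tr}(\mathcal{D}_A^{-1}M_{kl})=0$ (diagonal times off-diagonal has vanishing trace) and the product $\mathcal{D}_A^{-1}M_{kl}\mathcal{D}_A^{-1}M_{rs}$ remains off-diagonal, so both terms vanish. For $k\neq l$ and $r\neq s$ the product of traces still vanishes, and the trace of the product, computed entrywise, requires $(M_{kl})_{J,I}(M_{rs})_{I,J}\neq 0$, i.e.\ both $I\setminus J=\{l\}$, $J\setminus I=\{k\}$ \emph{and} $J\setminus I=\{r\}$, $I\setminus J=\{s\}$; this forces $k=s$, $l=r$, and produces the sum in the second branch.

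The main bookkeeping difficulty is the sign in $(M_{kl})_{J,I}$ coming from reordering the wedge $e_{(I\setminus\{l\})\cup\{k\}}$; the saving point is that exactly the same sign appears in $(M_{rs})_{I,J}=(M_{lk})_{I,J}$ under the identification $k=s,\ l=r$, so the two signs cancel and the remaining sum carries a global minus sign inherited from the $-\mathrm{tr}(\mathcal{D}_A^{-1}M_{kl}\mathcal{D}_A^{-1}M_{rs})$ term of the Jacobi expansion, matching the statement. All other combinations of $(k,l,r,s)$ yield no contribution, which accounts for the vanishing third case of the lemma.
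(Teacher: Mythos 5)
Your proposal is correct and follows essentially the same route as the paper: the paper's proof is exactly the observation that the formulas follow from Lemma \ref{mpandda} by differentiating the determinant, with the linearity of $\mathcal D_A$ in the entries of $A$ killing the second-order terms and the diagonality of $\mathcal D_A$ (for diagonal $A$) confining the nonzero contributions. You have merely carried out in full the Jacobi-formula computation and the sign bookkeeping that the paper leaves implicit.
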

	\begin{proof} The formulas again follow from (\ref{mpandda}) and differentiation of determinants. Recall that coefficients in $\mathcal D_{A}$ are linear with respect to the entries of $A$, which explains the lack of additional terms. It is also interesting
		to note that there doesn't seem to be an easy way to derive Lemma \ref{Hessfla2} through the expression in terms of elementary symmetric polynomials.
	\end{proof}
A consequence of the above formulas is the following result:
\begin{lemma}\label{1kk1}
		Let $A$ be a diagonal matrix with eigenvalues $\la_1\geq\cdots\geq\la_n$. If $\la_1>\la_k$ then 
	\begin{equation*}\frac{\partial^2\widetilde{\mathcal M}_p}{\partial a_{1k}a_{k1}}(A)=\widetilde{\mathcal M}_p^{1k,k1}(A)=\frac{\widetilde{\mathcal M}_p^{kk}-\widetilde{\mathcal M}_p^{11}}{\la_1-\la_k}.
	\end{equation*}
\end{lemma}
\begin{proof}
As $A$ is diagonal we have $\M^{1k}(A)=0$ for $k\neq 1$. Then
$$\frac{\partial^2\widetilde{\mathcal M}_p}{\partial a_{1k}a_{k1}}(A)=\frac1{\binom np}\M(A)^{\frac1{\binom np}-1}\M^{1k,k1}(A)$$
and the formula follows from direct calculations. We also refer to \cite{S} where analogous formula is proved in a much broader context.
\end{proof}
	\subsection{Concavity of $\M^{1/\binom np}$}
	Using Lemmas \ref{Hessfla1} and \ref{Hessfla2} it is a matter of routine calculation to prove that the map $P_p\ni A\rightarrow \widetilde{\mathcal M}_p(A)$ is concave.  Below we present another proof of this fact which is modelled on analogous result for the Monge-Amp\`ere equation- see \cite{Bl96}:
	\begin{lemma}\label{concave}
		Given $A\in P_p$ one has
		$$\widetilde{\mathcal M}_p(A)=\M(A)^{1/\binom np}=inf_{C\in \hat{P}_p}\sum_{k,l=1}^n\M^{kl}(C)A_{kl},$$
		where $\hat{P}_p$ stands for the set of matrices $C\in P_p$, such that $\M(C)=\left[\binom np\right]^{\frac{-\binom np}{\binom np-1 }}.$
	\end{lemma}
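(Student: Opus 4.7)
Set $N := \binom{n}{p}$. The key observation is Lemma \ref{mpandda}, which identifies $\M(A) = \det(\mathcal D_A)$, together with the fact that the map $A \mapsto \mathcal D_A$ is linear and sends $P_p$ into the cone of positive definite symmetric endomorphisms of $\Lambda^p\mathbb R^n$ (the latter by the Harvey--Lawson proposition recalled above). Since the claimed formula displays $\widetilde{\mathcal M}_p$ as the infimum of a family of linear functions of $A$, concavity of $\widetilde{\mathcal M}_p$ on $P_p$ is an automatic byproduct. My strategy, modeled on the Monge--Amp\`ere argument in \cite{Bl96}, is to prove the inequality $\widetilde{\mathcal M}_p(A) \leq \sum_{k,l}\M^{kl}(C)A_{kl}$ for every $C\in\hat P_p$, and then exhibit a specific $C$ where equality holds.

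For the inequality I would apply the classical Minkowski-type bound
$$\det(S)^{1/N} \leq \tfrac{1}{N}\det(T)^{1/N}\,\mathrm{tr}(ST^{-1}),$$
valid for positive definite symmetric $N\times N$ matrices $S,T$ (an immediate consequence of AM--GM on the eigenvalues of $T^{-1/2}ST^{-1/2}$), to the pair $S=\mathcal D_A$, $T=\mathcal D_C$. This yields $\widetilde{\mathcal M}_p(A)\leq \tfrac{1}{N}\widetilde{\mathcal M}_p(C)\,\mathrm{tr}(\mathcal D_A\mathcal D_C^{-1})$. On the other hand the identity $\mathcal D_{C+tA} = \mathcal D_C + t\mathcal D_A$, combined with the standard formula for differentiating a determinant at a positive definite matrix, gives
$$\sum_{k,l}\M^{kl}(C)A_{kl} = \det(\mathcal D_C)\,\mathrm{tr}(\mathcal D_C^{-1}\mathcal D_A).$$
Substituting the normalization $\M(C) = N^{-N/(N-1)}$ defining $\hat P_p$ and comparing the two expressions reduces the Minkowski bound to exactly $\widetilde{\mathcal M}_p(A)\leq \sum_{k,l}\M^{kl}(C)A_{kl}$ after a short exponent check.

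To reach equality I would take $C := \gamma A$ with $\gamma>0$ chosen so that $\M(\gamma A) = \gamma^N\M(A) = N^{-N/(N-1)}$, i.e.\ $\gamma = N^{-1/(N-1)}/\widetilde{\mathcal M}_p(A)$; this $C$ lies in $\hat P_p$ since $A\in P_p$ and $\gamma>0$. As $\M$ is homogeneous of degree $N$, its first derivatives $\M^{kl}$ are homogeneous of degree $N-1$, so Euler's identity (already exploited in Lemma \ref{basicM}) gives
$$\sum_{k,l}\M^{kl}(\gamma A)A_{kl} = \gamma^{N-1}\sum_{k,l}\M^{kl}(A)A_{kl} = \gamma^{N-1}\cdot N\M(A),$$
which after plugging in $\gamma$ collapses to $\M(A)^{1/N} = \widetilde{\mathcal M}_p(A)$. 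The only real obstacle is bookkeeping with the homogeneity exponents; the conceptual content lies entirely in the reduction via Lemma \ref{mpandda} to a genuine determinant on $\Lambda^p\mathbb R^n$, which allows the Monge--Amp\`ere argument to be transplanted essentially verbatim.
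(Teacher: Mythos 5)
Your proposal is correct, and for the substantive half of the lemma it takes a genuinely different route from the paper. The rescaling $C=\gamma A$ used to realize the value $\widetilde{\mathcal M}_p(A)$ as one of the linear functionals is exactly what the paper does for the inequality $\widetilde{\mathcal M}_p(A)\geq \inf_{C\in\hat P_p}\sum_{k,l}\M^{kl}(C)A_{kl}$ (the paper additionally handles $\M(A)=0$ by approximating with $A+tI$, a case which is vacuous for $A$ in the open cone $P_p$, as your construction of $\gamma$ implicitly uses). For the reverse inequality, however, the paper simply cites G\aa rding's inequality for hyperbolic polynomials, whereas you give a self-contained argument: pull everything back through Lemma \ref{mpandda} to the honest determinant $\det(\mathcal D_A)$ on $\Lambda^p\mathbb R^n$, use the linearity of $A\mapsto\mathcal D_A$ together with Jacobi's formula to identify $\sum_{k,l}\M^{kl}(C)A_{kl}=\det(\mathcal D_C)\,\mathrm{tr}(\mathcal D_C^{-1}\mathcal D_A)$, and then apply the AM--GM (Minkowski-type) bound $\det(S)^{1/N}\leq N^{-1}\det(T)^{1/N}\mathrm{tr}(ST^{-1})$; the normalization $\M(C)=N^{-N/(N-1)}$ with $N=\binom np$ --- which is simultaneously the homogeneity degree of $\M$ and the size of $\mathcal D_C$ --- makes the constants cancel exactly, and your exponent bookkeeping checks out. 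Both positive-definiteness hypotheses ($\mathcal D_A,\mathcal D_C>0$) are guaranteed by the Harvey--Lawson characterization since $A,C\in P_p$. What your route buys is an elementary proof entirely internal to the paper's machinery, in effect a direct verification of the G\aa rding inequality in this special case by exploiting that $\M$ factors as a determinant of a linear matrix-valued map; what the paper's citation buys is generality, since G\aa rding's inequality holds for arbitrary hyperbolic polynomials, not only those admitting such a determinant representation.
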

	\begin{proof}
		Obviously $\M(A)^{1/\binom np}\geq inf_{C\in \hat{P}_p}\sum_{k,l=1}^n\M^{kl}(C)A_{kl}$ as follows from choosing $C$ as a suitable rescaling of $A$ whenever $\M(A)> 0$. If in turn $\M(A)=0$ then we approximate $A$ by $A+tI$, $I$
		being the identity matrix and $t>0$, and conclude by continuity as $t\searrow 0$.
		
		The reverse inequality is a special case of the G\aa rding inequality- see \cite{Ga}, as $\M$ is a G\aa rding operator- see Section 4 in \cite{HL1}. For an alternative simpler proof we refer to \cite{ADO20}.
	\end{proof}
	Using Lemma \ref{concave} it is straightforward to prove that the map 
	$$P_p\ni A\rightarrow \M(A)^{1/\binom np}$$
	is concave: 
	\begin{corollary}
		$P_p\ni A\rightarrow \widetilde{\mathcal M}_p(A)$ is concave mapping on the set of Hermitian matrices with eigenvalues in $\P$.
	\end{corollary}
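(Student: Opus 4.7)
The plan is to deduce concavity directly from the variational representation in Lemma \ref{concave}, which expresses
$$\widetilde{\mathcal M}_p(A) \;=\; \inf_{C\in\hat{P}_p}\sum_{k,l=1}^n\M^{kl}(C)\,A_{kl}.$$
For each fixed $C\in\hat{P}_p$ the map $A\mapsto \sum_{k,l}\M^{kl}(C)A_{kl}$ is affine linear in the entries of $A$, and in particular concave. Since a pointwise infimum of concave functions is concave, the concavity of $\widetilde{\mathcal M}_p$ follows at once.

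The only preliminary bookkeeping is to confirm that the underlying domain $P_p$ is convex, so that the notion of concavity makes sense on it. For this I would observe that $\overline{P}_p$ is cut out by the linear inequalities $\mathrm{tr}(A|_V)\ge 0$, where $V$ ranges over all $p$-dimensional subspaces of $\mathbb R^n$ (the minimum of such a trace equals the sum of the $p$ smallest eigenvalues of $A$, and by symmetry all partial sums $\la_{i_1}+\cdots+\la_{i_p}$ appearing in the definition of $\overline{P}_p$ are realized this way). Hence $\overline{P}_p$, and consequently its interior $P_p$, is an intersection of half-spaces in the space of symmetric matrices, and is therefore convex.

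I do not foresee any genuine obstacle in carrying this out: all of the substantive content has already been absorbed into Lemma \ref{concave} via the G\aa rding inequality, and what remains is the textbook principle that the infimum of an affine family of functions is concave. The only point worth double-checking is that the coefficients $\M^{kl}(C)$ defining these linear functionals are well-defined throughout $\hat{P}_p$, which is guaranteed by Lemma \ref{Hessfla1} since $\hat{P}_p\subset P_p$ lies in the open cone where $\M(C)>0$.
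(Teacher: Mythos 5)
Your proof is correct and coincides with the paper's own argument: the paper likewise deduces concavity directly from the infimum representation of Lemma \ref{concave}, observing that for each fixed $C$ the functional is linear in $A$ and that an infimum of such functionals is concave (phrased there as a midpoint inequality). Your additional remark on the convexity of $P_p$ is a harmless piece of bookkeeping the paper leaves implicit.
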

	\begin{proof}
		
		\begin{align*}
			&\widetilde{\mathcal M}_p(\frac{A+B}2)=\frac{\M({A+B})^{1/\binom np}}2=inf_{C\in \hat{P}_p}\frac{\sum_{k,l=1}^n\M^{kl}(C)(A_{kl}+B_{kl})}{2}\\
			&\geq inf_{C\in \hat{P}_p}\frac{\sum_{k,l=1}^n\M^{kl}(C)A_{kl}}{2}+inf_{C\in \hat{P}_p}\frac{\sum_{k,l=1}^n\M^{kl}(C)B_{kl}}{2}\\
			&=\frac{\widetilde{\mathcal M}_p(A)+\widetilde{\mathcal M}_p(B)}{2},
		\end{align*}
		which implies the claimed concavity.
	\end{proof}
	\begin{remark}
		Lemma \ref{concave} in fact shows that  $\M$ is a special case of a Bellman operator.
	\end{remark}
	
	\subsection{Further properties of $\mathcal P_p$}
	Below we list some subtler properties of the cones $\P$ and $P_p$ which will be crucial in the establishment of the a priori estimates later on.
	
	We begin with an analogue of Claim (3.10) from \cite{CW}. For diagonal matrices the argument is even easier in the case of the $\M$ operator:
	\begin{lemma}\label{usedingradient}
		Suppose that the diagonal matrix $A=diag (\la_1,\cdots,\la_n)$ belongs to $P_p$ and $\la_1\geq\cdots\geq\la_n$. Suppose moreover that $j\geq n-p+1$. Then there is a constant $\theta=\theta(n,p)$, such that
		$$\M^{jj}(A)\geq \theta \sum_{l=1}^n\M^{ll}(A).$$
	\end{lemma}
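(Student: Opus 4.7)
The plan is to reduce the whole statement to a purely combinatorial inequality about reciprocals of $p$-subset eigenvalue sums. By Lemma \ref{Hessfla1}, writing $\sigma(S):=\sum_{k\in S}\lambda_k$ for a $p$-subset $S\subset\{1,\dots,n\}$, we have $\mathcal{M}^{jj}(A)=\mathcal{M}(A)\sum_{S\ni j}1/\sigma(S)$, and moreover $\sum_{l=1}^n\mathcal{M}^{ll}(A)=\mathcal{M}(A)\sum_l\sum_{S\ni l}1/\sigma(S)=p\,\mathcal{M}(A)\sum_{S}1/\sigma(S)$, since each $p$-subset is counted once per element. Since $A\in P_p$ every $\sigma(S)$ is strictly positive (so $\mathcal{M}(A)>0$), and the claim becomes: there is a constant $C=C(n,p)$ with $\sum_{S}1/\sigma(S)\le C\sum_{S\ni j}1/\sigma(S)$.

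The second step is where the hypothesis $j\ge n-p+1$ enters. This hypothesis means $|\{j+1,\dots,n\}|=n-j\le p-1$, so any $p$-subset $S\subset\{1,\dots,n\}\setminus\{j\}$ must contain at least one index strictly less than $j$; in particular $m:=\min S<j$, whence $\lambda_m\ge\lambda_j$ by our decreasing ordering. Define a swap map $\phi(S)=(S\setminus\{m\})\cup\{j\}$. Then $\phi(S)\ni j$ and $\sigma(\phi(S))=\sigma(S)+\lambda_j-\lambda_m\le\sigma(S)$, so $1/\sigma(S)\le 1/\sigma(\phi(S))$. Summing over $S\not\ni j$ and grouping by value of $\phi$ gives $\sum_{S\not\ni j}1/\sigma(S)\le\sum_{S'\ni j}|\phi^{-1}(S')|/\sigma(S')$. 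A preimage of $S'$ is obtained by removing $j$ and inserting some $m\in\{1,\dots,n\}$, so the crude bound $|\phi^{-1}(S')|\le n$ yields $\sum_{S}1/\sigma(S)\le(n+1)\sum_{S\ni j}1/\sigma(S)$.

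Combining these two steps gives $\sum_{l=1}^n\mathcal{M}^{ll}(A)\le p(n+1)\mathcal{M}^{jj}(A)$, i.e.\ the lemma with $\theta=1/(p(n+1))$. The only delicate point is the combinatorial identification of the hypothesis $j\ge n-p+1$ as precisely the condition that forces $\min S<j$ whenever $j\notin S$; everything else is bookkeeping with the explicit derivative formula of Lemma \ref{Hessfla1}. I do not expect any real obstacle: the argument is cleaner than the analogous one in \cite{CW} for $\Gamma_k$ because the $\mathcal{M}_p$ operator is already fully factored into linear forms $\lambda_{k_1}+\dots+\lambda_{k_p}$, so no manipulation of elementary symmetric polynomials is required.
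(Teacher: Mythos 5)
Your proof is correct and follows essentially the same route as the paper: both reduce via Lemma \ref{Hessfla1} to showing that $\sum_{S}1/\sigma(S)$ is controlled by $\sum_{S\ni j}1/\sigma(S)$, using $j\ge n-p+1$ together with the decreasing ordering of the eigenvalues. The paper takes a small shortcut where you use the swap map $\phi$: since $j\ge n-p+1$, the tuple $(n-p+1,\dots,n)$ contains $j$ and realizes the minimal sum, so its single reciprocal already dominates every term on the left; your injection-with-multiplicity argument reaches the same conclusion with a bit more bookkeeping and a comparable explicit constant.
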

	\begin{proof}
		As $\M^{jj}(A)=\M(A)\sum_{j\in \lbr k_1,\cdots,k_p\rbr}\frac{1}{\la_{k_1}+\cdots+\la_{k_p}}$
		it suffices to prove that for {\it any} ordered $p$-tuple $(l_1,\cdots,l_p)$ the term $\frac{1}{\la_{l_1}+\cdots+\la_{l_p}}$ is dominated by one of the terms defining $\M^{jj}(\la)$ i.e. involving a $p$-tuple containing $j$. But by assumption we have $j\geq n-p+1$
		i.e. the $p$-tuple $(n-p+1,\cdots, n)$ contains $j$ and the corresponding sum is clearly the smallest possible.
	\end{proof}
	In the proof of the first order estimate we shall need a version of Lemma \ref{usedingradient} for (special) non-diagonal matrices. Unfortunately the idea to compare any main minor with the minor corresponding to the smallest entry on the diagonal breaks down as the following example shows:
	\begin{example}
		For any sufficiently small $\e>0$ the matrix
		$$\left[\begin{array}{lll}
			1&\e&\e\\
			\e&\frac1\e&\frac1\e-\e\\
			\e&\frac1\e-\e&\frac1\e
		\end{array}\right]
		$$
		is positive definite, the smallest diagonal entry is $1$ and the corresponding $2\times2$ minor is equal to $2-\e^2$ which clearly cannot majorize the sum of all main minors as $\e\searrow 0^{+}$.
	\end{example}
	
	In order to formulate the mentioned  non-diagonal version we need a definition:
	\begin{definition}
		A matrix $A$ is said to be an arrowhead matrix if all off diagonal entries are zero except possibly for the first row and the first column.
	\end{definition}
	
	\begin{lemma}\label{newwritten} Let $A\in P_p$ be an arrowhead matrix.  If additionally $a_{11}\leq -c<0$ for some $c>\frac 2n(\M(A))^{1/n}\binom{n-1}{p}$ then for some $\theta>0$ dependent on $n$,$p$ and $c$  we have
		$$\M^{11}(A)=\frac{\pa \M}{\pa a_{11}}(A)\geq \theta\sum_{l=1}^n\M^{ll}(A).$$ 
	\end{lemma}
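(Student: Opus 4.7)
The plan is to use orthogonal invariance of $\M$ to reduce to the diagonal case (Lemma \ref{usedingradient}). First I would diagonalize $A=U\Lambda U^T$ with $U\in O(n)$ and $\Lambda=\mathrm{diag}(\la_1,\ldots,\la_n)$, eigenvalues ordered decreasingly. Since $\M$ depends only on the eigenvalues, and Lemma \ref{Hessfla1} shows that $\M^{kl}(\Lambda)=0$ for $k\neq l$, differentiating the identity $\M(V^T\tilde A V)=\M(\tilde A)$ in matrix entries gives the matrix identity $(\M^{kl}(A))_{k,l} = U\,\mathrm{diag}(\M^{jj}(\Lambda))\,U^T$. In particular
\begin{equation*}
\M^{11}(A) = \sum_{j=1}^n U_{1j}^2\,\M^{jj}(\Lambda), \qquad \sum_{l=1}^n \M^{ll}(A) = \sum_{j=1}^n \M^{jj}(\Lambda).
\end{equation*}

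Applying Lemma \ref{usedingradient} to $\Lambda$, for each $j\geq n-p+1$ one has $\M^{jj}(\Lambda)\geq \theta_0\sum_l\M^{ll}(\Lambda)$ with some $\theta_0=\theta_0(n,p)>0$. Setting $\alpha:=\sum_{j\geq n-p+1}U_{1j}^2$ then yields
\begin{equation*}
\M^{11}(A) \geq \theta_0\,\alpha\,\sum_{l=1}^n \M^{ll}(A),
\end{equation*}
so the problem reduces to producing a lower bound $\alpha\geq\theta_1(n,p,c)>0$.

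For that lower bound I would combine three inputs. First, $A\in P_p$ forces $\la_{n-p+1}\geq 0$, because otherwise the smallest $p$-sum $\la_{n-p+1}+\cdots+\la_n$ would be negative. Second, the Rayleigh identity $a_{11}=\sum_j U_{1j}^2\la_j\leq -c$ combined with the non-negativity of $\la_j$ for $j\leq n-p$ yields $\sum_{j\geq n-p+1}U_{1j}^2\la_j\leq -c$, whence $\alpha\,|\la_n|\geq c$. Third, one exploits the arrowhead secular equation $a_{11}-\la_n = \sum_{k\geq 2}a_{1k}^2/(a_{kk}-\la_n)$ together with the positivity of every block-diagonal entry $(D_1)_{II}=a_{11}+\sum_{j\in I\setminus\{1\}}a_{jj}$ of $\mathcal{D}_A$ (which follows from $\mathcal{D}_A\succ 0$ and forces every $(p-1)$-subset of $\{a_{22},\ldots,a_{nn}\}$ to sum to at least $c$). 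Together with the hypothesis $c>\tfrac{2}{n}\binom{n-1}{p}\M(A)^{1/\binom{n}{p}}$, channelled through Corollary \ref{below} applied to $\Lambda$, these inequalities should give a quantitative upper bound $|\la_n|\leq C(n,p,c)$, whence $\alpha\geq c/C(n,p,c)$.

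The main obstacle is precisely this last step: making the upper bound on $|\la_n|$ effective, purely in terms of $n,p,c$. The threshold $\tfrac{2}{n}\binom{n-1}{p}$ appears to be calibrated so that the $(D_1)_{II}>0$ inequalities, combined with the AM--GM bound of Corollary \ref{below}, keep $|\la_n|$ comparable to $c$ rather than escaping to infinity as the off-diagonal entries $a_{1k}$ grow; the previously displayed example just before the lemma shows that without some such hypothesis the conclusion can fail. Once this is in hand, the lemma follows with $\theta = \theta_0\cdot c/C(n,p,c)$.
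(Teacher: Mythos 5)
Your opening reduction is correct: since $\M$ is a Hessian (orthogonally invariant) operator, writing $A=U\Lambda U^T$ does give $\M^{11}(A)=\sum_j U_{1j}^2\M^{jj}(\Lambda)$ and $\sum_l\M^{ll}(A)=\sum_j\M^{jj}(\Lambda)$, and Lemma \ref{usedingradient} then reduces everything to a lower bound on $\alpha=\sum_{j\ge n-p+1}U_{1j}^2$. Likewise $\la_{n-p+1}>0$ and $\alpha|\la_n|\ge c$ are fine. The genuine gap is the step you yourself flag as the "main obstacle": the claimed bound $|\la_n|\le C(n,p,c)$ is simply \emph{false} under the hypotheses, so the route $\alpha\ge c/|\la_n|\ge c/C$ cannot be completed. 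Take $n=3$, $p=2$, $c=1$ and the arrowhead matrix
\begin{equation*}
A=\begin{pmatrix}-1 & b & 0\\ b & 2 & 0\\ 0&0& L+L^{-2}\end{pmatrix},\qquad b^2=L^2+L-2 .
\end{equation*}
Its eigenvalues are $1+L$, $L+L^{-2}$ and $-L$, the pairwise sums are $1+2L+L^{-2}$, $1$ and $L^{-2}$, so $A\in P_2$ and $\M(A)=(1+2L+L^{-2})L^{-2}\to 0$ as $L\to\infty$; hence the hypothesis $c>\frac23\M(A)^{1/3}$ holds for all large $L$ while $|\la_3|=L\to\infty$. (In this example $\alpha\to\frac12$, so the conclusion survives, but not by your mechanism; whether $\alpha$ admits a uniform lower bound in general is exactly what remains unproved, and your sketch never actually brings the arrowhead structure or the secular equation to bear on it.)

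For comparison, the paper does not diagonalize $A$ at all. It works directly with the $\binom np\times\binom np$ matrix $\mathcal D_A$, observing that for an arrowhead $A$ both diagonal blocks of $\mathcal D_A$ (indexed by the $p$-tuples containing $1$ and those not containing $1$) are themselves diagonal. Expanding $\det\mathcal D_A$ along a column indexed by a tuple $\beta\not\ni 1$, and using positivity of the relevant minors, Cauchy--Schwarz, the identity $(p-1)u_{\beta\beta}+pa_{11}=\sum_{\alpha\in\Theta_\beta}u_{\alpha\alpha}$ and Maclaurin's inequality (this is where the threshold $\frac2n\M(A)^{1/n}\binom{n-1}{p}$ enters), it shows $\sum_{\beta\not\ni 1}U^{\beta\beta}\le C(n,p)\sum_{\alpha\ni 1}U^{\alpha\alpha}=C(n,p)\M^{11}(A)$. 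If you want to keep your eigenvalue-based strategy you would need a direct, quantitative lower bound on $\alpha$ exploiting the arrowhead hypothesis; as written, the argument is not a proof.
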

	\begin{proof}
		Denote by $\Theta$ the set of all ordered $p$-tuples containing $1$, and by $\Xi$- the remaining ordered $p$-tuples. We denote the entries of $\mathcal D_A$  suggestively by $u_{\alpha\beta}$. Recall that $u_{\alpha\alpha}=\sum_{i\in\alpha}a_{ii}$.
		Observe that (compare Examples \ref{a} and \ref{b}) for an arrowhead matrix $A$ the matrix $\mathcal D_A$ can be expressed as
		\begin{equation}\label{da}
			\left[\begin{array}{l|l}
				\left(diag(u_{\theta\theta})\right)_{\theta\in \Theta}&\left(u_{\theta\xi}\right)_{\theta\in\Theta,\xi\in\Xi}\\
				\hline
				\left(u_{\xi\theta}\right)_{\theta\in\Theta,\xi\in\Xi}&\left(diag(u_{\xi\xi})\right)_{\xi\in \Xi}
			\end{array}
			\right],
		\end{equation}
		i.e. $(u)_{\alpha\beta}$ is a block matrix with upper left and lower right blocks being diagonal matrices. This follows since $A$ being arrowhead implies that nontrivial input in $\mathcal D_A$ can occur either from the action of $A$ on the first basis vector $e_1$ or from the $e_1$-component of $Ae_k$ or finally from the $e_k$-component of $Ae_k$ for some $k\geq1$. Another property which will be crucial in the sequel is that for fixed $\beta\in\Xi$ the nonzero entries in the column $\beta$ could appear only on the rows $\alpha\in \Theta_\beta$  which  are given by
		$$\Theta_\beta:=\lbr \alpha\in\Theta|\ \exists q\in\beta, \alpha=\lbr{1}\rbr\cup\beta\setminus\lbr q\rbr\rbr.$$
		
		At this moment we wish to emphasize that in general it is not true that $\forall \alpha\in\Theta_\beta\ u_{\alpha\alpha}\leq u_{\beta\beta}$, as we do not assume that $a_{11}$ is the smallest diagonal entry of $A$. On the other hand one has
		$$(p-1)u_{\beta\beta}+pa_{11}=\sum_{\alpha\in\Theta_\beta}u_{\alpha\alpha}$$
		and hence
		\begin{equation}\label{bbeta}
			\forall\beta\in\Xi\ \forall \alpha\in\Theta_\beta\ \  u_{\beta\beta}\geq max\lbr\frac1{p-1}u_{\alpha\alpha}, \frac{pc}{p-1}\rbr.
		\end{equation}

		As $\mathcal D_A$ is positive definite the determinant of any $(\binom{n-1}{p-1}+1)\times(\binom{n-1}{p-1}+1)$ main minor matrix is positive. We apply this to any minor formed by the first $\binom{n-1}{p-1}$ rows and columns (the $\Theta$-block) to which we add the row and column $\beta$.
		Then we obtain
		\begin{equation}\label{beal}
			u_{\beta\beta}\geq \sum_{\alpha\in\Theta_\beta}\frac{|u_{\alpha\beta}|^2}{u_{\alpha\alpha}}.
		\end{equation}
		
		
		Fix now a small $\e>0$ to be chosen later on.
		
		Expanding $det(\mathcal D_A)$ with respect to the column $\beta$ we obtain
		
		$$\M(A)=u_{\beta\beta}U^{\beta\beta}+\sum_{\alpha\in\Theta_\beta}u_{\alpha\beta}U^{\alpha\beta},$$
		where $U^{\alpha\beta}$ denotes the co-factor matrix of $(u)_{\alpha\beta}$. Of course $(U^{\alpha\beta})_{\alpha\beta}$ is positive definite as $A\in P_p$ and thus $|U^{\alpha\beta}|^2\leq U^{\alpha\alpha}U^{\beta\beta}$. Hence
		\begin{align*}
			&\M(A)\geq u_{\beta\beta}U^{\beta\beta}-\sum_{\alpha\in\Theta_\beta}|u_{\alpha\beta}|\sqrt{U^{\alpha\alpha}U^{\beta\beta}}\\
			&\geq u_{\beta\beta}U^{\beta\beta}-\sqrt{\left(\sum_{\alpha\in\Theta_\beta}\frac{|u_{\alpha\beta}|^2}{u_{\alpha\alpha}}\right)\left({\sum_{\alpha\in\Theta_\beta}U^{\alpha\alpha}U^{\beta\beta}u_{\alpha\alpha}}\right)}\\
			&\geq u_{\beta\beta}U^{\beta\beta}-\sqrt{u_{\beta\beta}U^{\beta\beta}\sum_{\alpha\in\Theta_\beta}U^{\alpha\alpha}u_{\alpha\alpha}}\\
			&\geq (1-\e)u_{\beta\beta}U^{\beta\beta}-\frac1{4\e}\sum_{\alpha\in\Theta_\beta}U^{\alpha\alpha}u_{\alpha\alpha},
		\end{align*}
		where we have used Cauchy-Schwarz inequality, then (\ref{beal}) and finally the elementary inequality $xy\leq \e x^2+\frac1{4\e}y^2$. 
		
		Thus exploiting (\ref{bbeta}) we obtain
		\begin{equation}\label{123}
			\frac{(p-1)}{4\e}u_{\beta\beta} \sum_{\alpha\in\Theta_\beta}U^{\alpha\alpha}+\M(A)\geq\frac1{4\e}\sum_{\alpha\in\Theta_\beta}U^{\alpha\alpha}u_{\alpha\alpha}+ \M(A)\geq (1-\e)u_{\beta\beta}U^{\beta\beta}.
		\end{equation}
		Dividing by $u_{\beta\beta}$, applying (\ref{bbeta}) and then summing over $\beta\in \Xi$ results in
		\begin{equation*}
			C(n,p,\e)\sum_{\alpha\in\Theta}U^{\alpha\alpha}+\binom{n-1}{p}\frac{\M(A)}{c}\geq \sum_{\beta\in\Xi}(1-\e)U^{\beta\beta}+\frac12\sum_{\alpha\in\Theta}U^{\alpha\alpha}.
		\end{equation*}
		Note that 
		$$\frac 12 \sum_{\beta\in\Xi}U^{\beta\beta}+\frac12\sum_{\alpha\in\Theta}U^{\alpha\alpha}=\frac12\sigma_{n-1}(\mathcal D_A)$$
		$$\geq \frac n2 (\sigma_n(\mathcal D_A))^{(n-1)/n}$$
		by Maclaurin inequality. But by our assumption on $c$ the latter quantity satisfies the bound
		
		$$\frac n2 (\sigma_n(\mathcal D_A))^{(n-1)/n}=\frac n2(\M(A))^{(n-1)/n}\geq \binom{n-1}{p}\frac{\M(A)}{c}.$$
		
		Thus fixing $\e=\frac 14$, say, we finally obtain
		\begin{equation}\label{12345}
			C(n,p,\e)\M^{11}(A)=C(n,p,\e)\sum_{\alpha\in\Theta}U^{\alpha\alpha}\geq \sum_{\beta\in\Xi}U^{\beta\beta},
		\end{equation}
		which yields the claimed result.

	\end{proof}
	\begin{remark}
		It is very likely that through a more careful analysis one could remove the dependency of $c$ on $\M(A)$- this is easily seen to be true if $p=n-1$. We have not pursued this as the stated version is satisfactory for the applications.
	\end{remark}
	
	\begin{lemma}\label{toadd}
		Suppose that the vector $\la=(\la_1,\cdots,\la_n)$ belongs to $\P$ and $\la_1\geq\cdots\geq\la_n$. Then
		$$\widetilde{\mathcal M}_p^{11}(\la)\la_{1}\geq \frac{1}{n}(\M(\la))^{\frac1{\binom np}}.$$
	\end{lemma}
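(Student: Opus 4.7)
The plan is to reduce everything to the explicit diagonal formula for $\mathcal{M}_p^{11}$ provided by Lemma~\ref{Hessfla1}, and then to exploit the ordering $\lambda_1\geq\cdots\geq\lambda_n$ to bound each $p$-sum denominator by $p\lambda_1$. Since $\widetilde{\mathcal{M}}_p$ is a symmetric function of the eigenvalues, we may identify $\widetilde{\mathcal{M}}_p^{11}(\lambda)$ with the partial derivative of $\widetilde{\mathcal{M}}_p$ evaluated on $\mathrm{diag}(\lambda_1,\dots,\lambda_n)$ at the entry $a_{11}=\lambda_1$, and carry out the computation on that diagonal matrix.

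The first step is to apply the chain rule together with Lemma~\ref{Hessfla1} to write
\begin{equation*}
\widetilde{\mathcal{M}}_p^{11}(\lambda)=\frac{1}{\binom{n}{p}}\mathcal{M}_p(\lambda)^{1/\binom{n}{p}}\sum_{1\in\{k_1,\dots,k_p\}}\frac{1}{\lambda_{k_1}+\cdots+\lambda_{k_p}}.
\end{equation*}
Multiplying by $\lambda_1$ reduces the claim to the purely combinatorial inequality
\begin{equation*}
\sum_{1\in\{k_1,\dots,k_p\}}\frac{\lambda_1}{\lambda_{k_1}+\cdots+\lambda_{k_p}}\geq\frac{\binom{n}{p}}{n}.
\end{equation*}

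The second step notices that the hypothesis $\lambda\in\mathcal{P}_p$ forces all the $p$-sums above to be strictly positive, and it also forces $\lambda_1>0$ (otherwise all $p$-sums would be $\leq 0$). Since $\lambda_1$ is the largest eigenvalue, for every $p$-tuple $(k_1,\dots,k_p)$ containing $1$ we have $\lambda_{k_1}+\cdots+\lambda_{k_p}\leq p\lambda_1$, so each term in the sum is at least $1/p$. There are exactly $\binom{n-1}{p-1}$ such tuples, which yields a lower bound of $\binom{n-1}{p-1}/p$. The proof concludes by the elementary identity $p\,\binom{n}{p}=n\,\binom{n-1}{p-1}$, which gives $\binom{n-1}{p-1}/p=\binom{n}{p}/n$ exactly matching the required constant.

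There is no real obstacle here: the only thing to be mildly careful about is that $\lambda_1>0$ (so that the inequality $\lambda_{k_1}+\cdots+\lambda_{k_p}\leq p\lambda_1$ translates into a genuine lower bound on the ratio), and this is guaranteed by $\lambda\in\mathcal{P}_p$ together with $\lambda_1$ being the maximal eigenvalue.
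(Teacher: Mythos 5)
Your proposal is correct and follows essentially the same route as the paper: both use the diagonal formula from Lemma \ref{Hessfla1} to reduce the claim to bounding $\sum_{1\in\{k_1,\dots,k_p\}}\frac{\lambda_1}{\lambda_{k_1}+\cdots+\lambda_{k_p}}$ from below, and both conclude via the trivial inequality $\lambda_{k_1}+\cdots+\lambda_{k_p}\leq p\lambda_1$. You merely make explicit the count of $\binom{n-1}{p-1}$ tuples, the identity $p\binom{n}{p}=n\binom{n-1}{p-1}$, and the positivity of $\lambda_1$, all of which the paper leaves implicit.
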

	\begin{proof}
		Recall that $\widetilde{\mathcal M}_p^{11}\la_1$ is simply
		$$\frac{1}{\binom np}(\M(\la))^{\frac1{\binom np}}\sum _{1\in \lbr k_1,\cdots,k_p\rbr}\frac{\la_1}{\la_{k_1}+\cdots+\la_{k_p}}$$
		and the proof follows form the trivial inequality 
		$$\la_{k_1}+\cdots+\la_{k_p}\leq p\la_1.$$
	\end{proof}
	
	For the second order interior estimate one has to exploit the concavity of $\widetilde{\mathcal M}_p$ in order to handle the third order terms.
	Next lemma establishes the relevant inequality in special cases analogously to its $\sigma_k$ counterpart (compare with Claim (4.13) in \cite{CW} and its proof):
	\begin{lemma}\label{usedinhessian}
		Suppose that the vector $\la=(\la_1,\cdots,\la_n)$ belongs to $\P$ and $\la_1\geq\cdots\geq\la_n$. Suppose $p=2,n-2$ or $n-1$. Then, given any sufficiently small $\delta>0$ there is an $\e=\e(p,n,\delta)>0$ with the following property: if $\e\la_1\geq \la_{n-p+1}$, then for every $i=2,3,\cdots, n$
		$$-\M^{1i,i1}(\la)\geq(\frac12+\delta)\M^{ii}(\la)\frac1{\la_1}.$$
	\end{lemma}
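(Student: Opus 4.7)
The plan is to divide through by $\M(\la)>0$ and reduce to a combinatorial inequality over subsets. Writing $S_A:=\sum_{j\in A}\la_j$, Lemmas \ref{Hessfla1} and \ref{Hessfla2} yield
$$\M^{ii}(\la)=\M(\la)\sum_{K\ni i}\frac1{S_K},\qquad -\M^{1i,i1}(\la)=\M(\la)\sum_\gamma \frac1{S_{\gamma\cup\{1\}}S_{\gamma\cup\{i\}}},$$
where $\gamma$ ranges over the $(p-1)$-subsets of $\{2,\dots,n\}\setminus\{i\}$. (Unpacking Lemma \ref{Hessfla2} via $\det\mathcal D_A$, the two $p$-tuples $(J,I)$ appearing there are forced to differ in exactly the pair $\{1,i\}$, so $J=\gamma\cup\{1\}$ and $I=\gamma\cup\{i\}$.) Splitting the right-hand sum according to whether $1\in K$ produces $T_A:=\sum_{K\ni 1,i}1/S_K$ and $T_C:=\sum_\gamma 1/S_{\gamma\cup\{i\}}$, and the inequality to prove becomes
$$\frac{2(\delta+1)}{\delta+2}\sum_\gamma \frac1{S_{\gamma\cup\{1\}}S_{\gamma\cup\{i\}}}\geq \frac{T_A+T_C}{\la_1}.$$

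Two structural estimates drive the argument. First, the Cauchy--Schwarz inequality together with the identity $S_{\gamma\cup\{1\}}/S_{\gamma\cup\{i\}}=1+(\la_1-\la_i)/S_{\gamma\cup\{i\}}$ gives
$$\sum_\gamma \frac1{S_{\gamma\cup\{1\}}S_{\gamma\cup\{i\}}}\geq \frac{T_C^2}{\binom{n-2}{p-1}+(\la_1-\la_i)T_C}.$$
Second, $\e\la_1\geq\la_{n-p+1}$ combined with $p$-positivity $\la_{n-p+1}+\cdots+\la_n>0$ forces $\la_{n-p+1}\geq 0$ (hence $\la_i\geq 0$ for every $i\leq n-p+1$) and $|\la_j|\leq (p-1)\e\la_1$ for each $j\geq n-p+1$. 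Consequently $S_K\geq(1-\e)\la_1$ for any $K\ni 1$, yielding $T_A\leq\binom{n-2}{p-2}/((1-\e)\la_1)$.

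Inserting the Cauchy--Schwarz bound and rearranging, the problem reduces to
$$\Big[\tfrac{\delta}{\delta+2}\la_1+\la_i\Big]T_C^2\geq \binom{n-2}{p-1}(T_A+T_C)+(\la_1-\la_i)T_AT_C,$$
which I expect to be the main technical hurdle. The bracket is positive for $\e$ small since $\la_i\geq 0$ when $i\leq n-p+1$ and $|\la_i|\leq(p-1)\e\la_1$ when $i\geq n-p+2$. To handle the inequality I would split on the size of $\la_i$: if $\la_i\geq\mu\la_1$ for a fixed small $\mu=\mu(n,p,\delta)$, then each $S_{\gamma\cup\{i\}}\gtrsim\mu\la_1$ makes $T_C\lesssim 1/(\mu\la_1)$ bounded and the inequality follows from the size of the coefficient on the left; if $\la_i<\mu\la_1$ (so in particular $T_C$ may be arbitrarily large), then the quadratic $T_C^2$ on the left dominates the linear-in-$T_C$ terms on the right and the strict slack $\tfrac{2(\delta+1)}{\delta+2}-1=\tfrac{\delta}{\delta+2}>0$ absorbs the $O(\e)$ errors coming from the Cauchy--Schwarz step. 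Fixing $\mu$ and then choosing $\e=\e(n,p,\delta)$ sufficiently small closes both regimes.
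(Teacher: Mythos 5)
Your identification of the relevant formulas is careful and in fact more accurate than the paper's own: expanding $\det\mathcal D_A$ around a diagonal matrix, the only pairs of multi-indices contributing to $\partial^2\det\mathcal D_A/\partial a_{1i}\partial a_{i1}$ are the \emph{matched} ones, so indeed $-\M^{1i,i1}(\la)=\M(\la)\sum_{\gamma}1/(S_{\gamma\cup\lbr1\rbr}S_{\gamma\cup\lbr i\rbr})$ with $\gamma$ running over $(p-1)$-subsets of $\lbr2,\dots,n\rbr\setminus\lbr i\rbr$. The paper's proof of this lemma instead compares the right-hand side with the larger family of \emph{all} products $1/(S_IS_J)$, $1\in I\not\ni i$, $i\in J\not\ni1$, most of which are not matched pairs and hence do not actually occur in $\M^{1i,i1}$; so your route is genuinely different, and your algebra up to the reduced quadratic inequality in $T_A,T_C$ (including the Cauchy--Schwarz step, $\la_{n-p+1}\geq0$, $|\la_j|\leq(p-1)\e\la_1$ for $j\geq n-p+1$, and $T_A\leq\binom{n-2}{p-2}/((1-\e)\la_1)$) is correct.

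The gap is in the first regime of your closing case analysis, and it cannot be repaired: when $\la_i$ is comparable to $\la_1$ the reduced inequality is false for small $\delta$. Take $n=4$, $p=2$, $i=2$ and $\la=(1,1,\eta,\eta)$ with $\eta>0$ tiny; this lies in $\mathcal P_2$ and satisfies $\la_{n-p+1}=\eta\leq\e\la_1$ for any prescribed $\e$. Here $T_A=\tfrac12$, $T_C=\tfrac{2}{1+\eta}$, your Cauchy--Schwarz step is an equality (both $\gamma$-terms coincide), and the required inequality becomes $\tfrac{2(\delta+1)}{\delta+2}\cdot\tfrac{2}{(1+\eta)^2}\geq\tfrac12+\tfrac{2}{1+\eta}$, i.e.\ in the limit $\tfrac{2(\delta+1)}{\delta+2}\geq\tfrac54$, which holds only for $\delta\geq\tfrac23$. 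So with the correct expression for $\M^{1i,i1}$ the asserted bound fails for $\delta<\tfrac23$ at indices $i$ with $\la_i$ close to $\la_1$ --- precisely the indices for which Theorem \ref{hessbound} invokes the lemma --- and the claim that this regime ``follows from the size of the coefficient on the left'' cannot be substantiated: both sides there are of order $1/\la_1$ with competing combinatorial constants. Your second regime ($\la_i<\mu\la_1$, $T_C\la_1$ large) is sound, but the crossover is exactly where the example sits. Any correct treatment must either restrict the range of $i$, weaken the constant, or exploit extra information beyond the eigenvalues of $D^2u$ at the maximum point.
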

	\begin{proof}
		Let us handle the $p=2$ case first.
		We know from Lemmas \ref{Hessfla1} and \ref{Hessfla2} that
		$$\M^{1j,j1}(\la)$$
		$$=-\M(\la)\sum_{j\neq 1,i}\frac{1}{(\la_{1}+\la_{j})(\la_{i}+\la_{j})},$$
		while
		$$\M^{jj}(\la)=\M(\la)\sum_{j\neq i}\frac{1}{\la_{i}+\la_j}.$$
		Hence we need to prove that under the assumptions as in the lemma
		\begin{equation}\label{toprove2}
		\sum_{j\neq 1,i}\frac{1}{(\la_{1}+\la_{j})(\la_{i}+\la_{j})}
		\end{equation}
		$$\geq (\frac12+\delta) \sum_{j\neq i}\frac{1}{\la_1(\la_{i}+\la_j)}=(\frac12+\delta) [\frac{1}{\la_1(\la_1+\la_i)}+\sum_{j\neq 1,i}\frac{1}{\la_1(\la_{i}+\la_j)}].$$
		
		Suppose that $i<n-1$ (this forces $n\geq 4$). By assumption we have
		$$\frac{1}{(\la_{1}+\la_{n-1})(\la_{i}+\la_{n-1})}+\frac{1}{(\la_{1}+\la_{n})(\la_{i}+\la_{n})}\geq$$ $$\frac1{(1+\e)\la_1}[\frac{1}{\la_{i}+\la_{n-1}}+\frac{1}{\la_{i}+\la_{n}}]\geq \frac{(1-2\e)}{\la_1}[\frac{1}{\la_{i}+\la_{n-1}}+\frac{1}{\la_{i}+\la_{n}}]$$ 
		if $\e$ is small enough.
		
		On the other hand recall that $\la_1\geq\la_j$ and $\la_j\geq \la_{n-1}\geq\la_n$ for $j<n-1$. Assuming $1/2-\e-\delta>0$ we have
		$$\sum_{j\neq 1,i}\frac{1}{(\la_{1}+\la_{j})(\la_{i}+\la_{j})}=\sum_{j\neq 1,i; j<n-1}\frac{1}{(\la_{1}+\la_{j})(\la_{i}+\la_{j})}+$$
		$$\frac{1}{(\la_{1}+\la_{n-1})(\la_{i}+\la_{n-1})}+\frac{1}{(\la_{1}+\la_{n})(\la_{i}+\la_{n})}\geq$$
		$$\sum_{j\neq 1,i; j<n-1}\frac{1}{2\la_{1}(\la_{i}+\la_{j})}+\frac{(1/2-2\e-\delta)}{\la_1}[\frac{1}{\la_{i}+\la_{n-1}}+\frac{1}{\la_{i}+\la_{n}}]+$$
		$$(\frac12+\delta)\frac1{\la_1}[\frac{1}{\la_{i}+\la_{n-1}}+\frac{1}{\la_{i}+\la_{n}}].$$
		We now divide $$(2/3+1/3)(\frac12-2\e-\delta)\frac1{\la_1}[\frac{1}{\la_{i}+\la_{n-1}}+\frac{1}{\la_{i}+\la_{n}}]\geq\frac43(\frac12-2\e-\delta)\frac1{\la_1}\frac{1}{\la_{1}+\la_{i}}+$$
		$$\frac23\frac{(1/2-2\e-\delta)}{n-3}\sum_{j\neq i;j<n-1}\frac1{\la_{1}(\la_{i}+\la_{j})}.$$
		
		Coupling all the estimates we end up with
		$$\sum_{j\neq 1,i}\frac{1}{(\la_{1}+\la_{j})(\la_{i}+\la_{j})}\geq$$
		$$ \sum_{j\neq i; j<n-1}(\frac{1}{2}+\frac23\frac{1/2-\e-\delta}{n-3})\frac1{\la_{1}(\la_{i}+\la_{j})}+$$
		$$(\frac12+\delta)\frac1{\la_1}[\frac{1}{\la_{i}+\la_{n-1}}+\frac{1}{\la_{i}+\la_{n}}]+\frac43(\frac12-2\e-\delta)\frac1{\la_1}\frac{1}{\la_{1}+\la_{i}}.$$
		
	Obviously if $\delta$ is small enough $\frac23\frac{1/2-\e-\delta}{n-3}\geq\delta, \frac43(\frac12-2\e-\delta)\geq \frac12+\delta$ and the claim is established.
		
		If, in turn, $i=n-1$ or $n$ then one of the summands of $\sum_{j\neq 1,i}\frac{1}{(\la_{1}+\la_{j})(\la_{i}+\la_{j})}$ is 
		$$\frac{1}{(\la_{1}+\la_{\lbr n-1,n\rbr\setminus\lbr  i\rbr})(\la_{i}+\la_{\lbr n-1,n\rbr\setminus\lbr  i\rbr})}=\frac{1}{(\la_{1}+\la_{\lbr n-1,n\rbr\setminus\lbr  i\rbr})(\la_{n-1}+\la_{n})}\geq$$
		$$\frac 1{6\e}\frac{1}{\la_{1}(\la_{1}+\la_{\lbr n-1,n\rbr\setminus\lbr  i\rbr})}+\frac2{3(1+\e)\la_1}\frac1{\la_{n-1}+\la_n}\geq$$
		$$\frac {(1-\e)}{6\e(1+\e)}\frac{1}{\la_{1}(\la_{1}+\la_i)}+\frac2{3(1+\e)\la_1}\frac1{(\la_{n-1}+\la_n)}$$
		as $\la_n\geq-\e\la_1$ (the factor $\frac{1-\e}{1+\e}$ is only needed in the $i=n$ case to estimate $\la_1+\la_{n-1}$ by $\la_1+\la_n$). For small $\delta>0$ dependent on $n$ and even smaller $\e>0$ these two terms absorb
		
	$$(\frac12+\delta)[\frac{1}{\la_1(\la_{n-1}+\la_n)}+\frac{1}{\la_1(\la_i+\la_1)}]+\delta\sum_{j\neq 1;j<n-1}\frac1{\la_1(\la_i+\la_j)}$$
	exploiting now the fact that $\la_{n-1}+\la_n\leq \la_i+\la_j$.
	
		It remains to observe that
		$$\sum_{j\neq 1,i; j<n-1}\frac{1}{(\la_{1}+\la_{j})(\la_{i}+\la_{j})}\geq\frac12\sum_{j\neq 1,i; j<n-1}\frac1{\la_1(\la_i+\la_j)}$$
		which finishes the proof for the $p=2$ case.

	Consider now $p=n-2$. In this case all the eigenvalues, except possibly for $\la_1$ and $\la_2$ are small when compared with $\la_1$.
	
	The inequality to be proven is equivalent to
	$$\sum_{\lbr j_1,\cdots,j_{n-3}\rbr\subset\lbr 2,3\cdots,n\rbr\setminus \lbr i\rbr}\frac{1}{(\la_1+\la_{j_1}+\cdots+\la_{j_{n-3}})(\la_i+\la_{j_1}+\cdots+\la_{j_{n-3}})}\geq$$
	$$(\frac12+\delta)\frac1{\la_1}[\sum_{\lbr j_1,\cdots,j_{n-3}\rbr\subset\lbr 2,3,\cdots,n\rbr\setminus \lbr i\rbr}\frac1{\la_i+\la_{j_1}+\cdots+\la_{j_{n-3}}}+$$
	$$\sum_{\lbr s_1,\cdots,s_{n-4}\rbr\subset\lbr 2,3\cdots,n\rbr\setminus \lbr i\rbr}\frac1{\la_1+\la_i+\la_{s_1}+\cdots+\la_{s_{n-4}}}].$$
	
	Consider first the case $i=2$. Note that then all the $\la_{j_k}$'s and $\la_{s_r}$'s are small when copared with $\la_1$ if $\e$ is chosen small enough. Hence, just like in the $p=2$ case we estimate
	$$\frac{1}{(\la_1+\la_{j_1}+\cdots+\la_{j_{n-3}})(\la_2+\la_{j_1}+\cdots+\la_{j_{n-3}})}\geq$$
	$$(\frac35+\frac25)\frac{1-(n-2)\e}{\la_1(\la_2+\la_{j_1}+\cdots+\la_{j_{n-3}})}.$$
	The $\frac35$-part dominates $(\frac12+\delta)\frac1{\la_1}\sum_{\lbr j_1,\cdots,j_{n-3}\rbr\subset\lbr 2,3\cdots,n\rbr\setminus \lbr i\rbr}\frac1{\la_2+\la_{j_1}+\cdots+\la_{j_{n-3}}}$ if $\delta$ is small enough.
	
	In order to handle $\sum_{\lbr s_1,\cdots,s_{n-4}\rbr\subset\lbr 2,3\cdots,n\rbr\setminus \lbr i\rbr}\frac1{\la_1+\la_i+\la_{s_1}+\cdots+\la_{s_{n-4}}}$ (such terms exist only if $n\geq 4$) note that
	
	$$\frac25\frac{1-(n-2)\e}{\la_1(\la_2+\la_{j_1}+\cdots+\la_{j_{n-3}})}$$
	$$\geq\sum_{r=1}^{n-3}\frac35\frac{(1-(n-2)\e)}{(n-3)\la_1(\la_1+\la_2+\la_{j_1}+\cdots+\la_{j_{n-3}}-\la_{j_r})}.$$
	Indeed, this follows from
	$$\frac1{\la_2+\la_{j_1}+\cdots+\la_{j_{n-3}}}\geq \frac 32\frac{1}{\la_1+\la_2+\la_{j_1}+\cdots+\la_{j_{n-3}}-\la_{j_r}}$$
	which holds as $\la_2\leq\la_1$ and $\e$ is small enough.
	
	Summing over all $(n-3)$ tuples $(j_1,\cdots,j_{n-3})$ and rearranging we obtain
	$$\frac25\sum_{\lbr j_1,\cdots,j_{n-3}\rbr\subset\lbr 3\cdots,n\rbr}\frac{1-(n-2)\e}{\la_1(\la_2+\la_{j_1}+\cdots+\la_{j_{n-3}})}\geq$$
	$$\frac35\sum_{\lbr s_1,\cdots,s_{n-4}\rbr\subset\lbr 3,\cdots,n\rbr}\frac{1-(n-2)\e}{\la_1(\la_1+\la_2+\la_{s_1}+\cdots+\la_{s_{n-4}})}$$
	which again dominates the remaining terms on the right hand side provided $\delta$ is small enough.
	
	Next we focus on the case $i\geq 3$. Then each term
	$$\frac{1}{(\la_1+\la_{j_1}+\cdots+\la_{j_{n-3}})(\la_i+\la_{j_1}+\cdots+\la_{j_{n-3}})}$$
	absorbs
	$$(\frac12-(n-3)\e)\frac{1}{\la_1(\la_i+\la_{j_1}+\cdots+\la_{j_{n-3}})}$$
		if $2\in\lbr j_1,\cdots,j_{n-3}\rbr$ and
	$$(1-(n-2)\e)\frac{1}{\la_1(\la_i+\la_{j_1}+\cdots+\la_{j_{n-3}})}$$
		otherwise.
		
		Hence it suffices to establish the bound
		$$(\frac12-(n-2)\e-\delta)\frac{1}{\la_1(\la_3+\la_{4}+\cdots+\la_{n})}\geq$$
			$$(\delta+(n-3)\e)\sum_{{1{\rm or}2\in\lbr j_1,\cdots,j_{n-3}\rbr}}\frac{1}{\la_1(\la_i+\la_{j_1}+\cdots+\la_{j_{n-3}})}.$$	
		
		Observe however that 
		$$\frac{1}{\la_1(\la_3+\la_{4}+\cdots+\la_{n})}\geq\frac{1}{\la_1(\la_i+\la_{j_1}+\cdots+\la_{j_{n-3}})}$$
		for each individual such tuple $\lbr j_1,\cdots,j_{n-3}\rbr$. Thus the left hand side will be greater than a constant (dependent on $n$) times the whole sum and hence the inequality is justified when $\delta$ is small enough.
		
		Finally the case $p=n-1$ is analogous to $p=n-2$ axcept that there is no subcase when $i$ is small.	
		
	\end{proof}
We remark that the constant in the inequality being larger than $\frac12$ is crucial for the second order interior estimates when following the proof from \cite{CW}. Unfortunately in general it is not true that $-\M^{1i,i1}(\la)$ dominates $\M^{ii}(\la)\frac1{\la_1}$ times a constant larger than $\frac 12$ as the following example shows:
\begin{example}
Take $n=9,\ p=3$. Fix $\la_1=\la_2=1,\cdots,\la_6=1$ and $\la_7=\la_8=\la_9=\e$ with $\e$ as small as we please. Then
$$-\M^{12,21} (\la)=\frac23+\frac{12}{(2+\e)^2}+\frac3{(1+2\e)^3},$$
while

$$\M^{22}(\la)\frac1{\la_1}=\frac{10}3+\frac{15}{2+\e}+\frac3{1+2\e}.$$
Their ratio as $\e\rightarrow 0^+$ tends to $\frac{40}{83}$ which is less than $\frac12$.
\end{example}

This lack of sufficient concavity in general forces  the application of subtler tools for the second order estimates in general. The next lemma will be crucial. I learned this argument from Professors W. Dong and J. Chu.
\begin{lemma}\label{cruciallemma}
		Suppose that the vector $\la=(\la_1,\cdots,\la_n)$ belongs to $\P$ and $\la_1\geq\cdots\geq\la_n$. Then, given any $\delta>0$ there is an $\e=\e(p,n,\delta)>0$ with the following property: if $\e\la_1\geq \la_{n-p+1}$ and $\la_1\geq 2+\frac2\delta$, then for every $i=2,3,\cdots, n$
	$$-2\frac{\M^{1i,i1}(\la)}{\la_1}+2\frac{\M^{11}(\la)}{\la_1(\la_1-\la_i+1)}\geq(\frac{\delta+2}{\delta+1})\M^{ii}(\la)\frac1{\la_1^2}.$$
	\end{lemma}
\begin{proof}
If $\la_i=\la_1$ then the inequality follows from $\M^{1i,i1}\leq 0, \M^{ii}=\M^{11}$, $\la_1\geq1$ and $2\geq\frac{2+\delta}{1+\delta}$. Assume now $\la_i<\la_1$. Then by Lemma \ref{1kk1} we have
$$-2\frac{\M^{1i,i1}(\la)}{\la_1}+2\frac{\M^{11}(\la)}{\la_1(\la_1-\la_i+1)}=2\frac{\M^{ii}(\la)-\M^{11}(\la)}{\la_1(\la_1-\la_i)}+2\frac{\M^{11}(\la)}{\la_1(\la_1-\la_i+1)}\geq$$
$$2\frac{\M^{ii}(\la)-\M^{11}(\la)}{\la_1(\la_1-\la_i+1)}+2\frac{\M^{11}(\la)}{\la_1(\la_1-\la_i+1)}=\frac{2\M^{ii}(\la)}{\la_1(\la_1-\la_i+1)}\geq\frac{2\M^{ii}(\la)}{\la_1((1+(n-1)\e)\la_1+1)}.$$
It suffices then to prove that
$$\frac{2}{\la_1((1+(n-1)\e)\la_1+1)}\geq (\frac{\delta+2}{\delta+1})\frac1{\la_1^2} $$

Rearranging terms leads to the equivalent inequality
$$[2-(1+(n-1)\e)(\frac{\delta+2}{\delta+1})]\la_1\geq\frac{\delta+2}{\delta+1}.$$
As $\la_1$ is by definition larger than $2+\frac2\delta$ the inequality holds and is strict when $\e$ is replaced by zero and hence remains true if $\e$ is taken sufficiently small.
\end{proof}	
	\section{First order estimate}
	The interior gradient estimate for the $\sigma_k$ equations has been independently proven by \cite{Tr} and \cite{CW}.
	
	In this section we apply the methods of Chou and Wang from \cite{CW} to obtain the following interior gradient estimate analogous to  Theorem 3.2 from that paper:
	\begin{theorem}
		Let $u$ be $p$-plurisubharmonic function in the ball $B_r(x_0)$. Assume that $u\in C^3(B_r(x_0))\cap C^1(\overline{B_r(x_0)})$ and $u$ solves the equation
		$$\M(u)=f(x,u),$$
		for a given non negative Lipschitz function $f$.\footnote{More precisely we assume that $|f|\leq C$, $|\frac{\pa f}{\pa u}|+\sum_j|\frac{\pa f}{\pa x_j}|\leq C$ for some constant $C$ called the Lipschitz bound of $f$. } Then 
		\begin{equation}\label{gradbound}
			|Du(x_0)|\leq C
		\end{equation}
		for some constant $C$ dependent on $n, p, r$,  $sup_{B_r(x_0)}|u|$ and the Lipschitz bound on $f$.
	\end{theorem}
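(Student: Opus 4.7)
My plan is to adapt the maximum-principle strategy of Chou--Wang \cite{CW}. I consider the auxiliary function
$$\Phi(x) = (r^{2}-|x-x_{0}|^{2})^{\gamma}\, |Du(x)|^{2}\, e^{K\phi(u(x))},$$
where $\gamma\ge 2$, $K>0$ is a constant to be fixed large, and $\phi$ is a smooth increasing convex function of $u$ (for example $\phi(u) = -\log(L-u)$ with $L>\sup|u|$, so that both $\phi'$ and $\phi''$ are uniformly positive and bounded by constants depending only on $\sup|u|$). Since $\Phi$ vanishes on $\partial B_{r}(x_{0})$, it attains its maximum at some interior point $\bar{x}$; bounding $\Phi(\bar{x})$ by a constant depending on the stated parameters will yield the desired bound on $|Du(x_0)|$ via $(r^{2})^{\gamma}|Du(x_0)|^{2}\leq e^{-K\phi(u(x_0))}\Phi(\bar{x})$. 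So assume for contradiction that $|Du(\bar{x})|$ is large. At $\bar{x}$ I rotate coordinates so that $Du(\bar{x})=|Du(\bar{x})|\,e_{1}$ and the lower-right $(n-1)\times(n-1)$ block of $D^{2}u(\bar{x})$ is diagonal; in this frame $D^{2}u(\bar{x})$ is an \emph{arrowhead} matrix, precisely the setting of Lemma~\ref{newwritten}.

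Writing $\eta(x)=r^2-|x-x_0|^2$ for brevity, the first-order condition $D\log\Phi(\bar x)=0$ forces
$$u_{1i}=-\tfrac{\gamma}{2}|Du|\,\tfrac{\eta_i}{\eta}\quad(i\ge 2), \qquad u_{11}=-\tfrac{\gamma}{2}|Du|\,\tfrac{\eta_1}{\eta}-\tfrac{K}{2}\phi'(u)|Du|^{2},$$
so for $|Du(\bar x)|$ large, $u_{11}$ is very negative and in particular $|u_{11}|$ exceeds the threshold $\tfrac{2}{n}(\M(u))^{1/n}\binom{n-1}{p}$ required by Lemma~\ref{newwritten}, because $\M(u)=f$ is bounded above. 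The second-order inequality $\sum_{ij}\M^{ij}(\log\Phi)_{ij}(\bar x)\le 0$ is then expanded. Lemma~\ref{basicM} applied to the differentiated equation replaces the third-order contribution $(2/|Du|^{2})\sum\M^{ij}u_{kij}u_k$ by $(2/|Du|^2)\sum_{k}u_k(f_{x_k}+f_u u_k)$, which is controlled by the Lipschitz bound on $f$. The linear term $K\phi'(u)\sum\M^{ij}u_{ij}=K\phi'(u)\binom{n}{p}f$ is non-negative. Substituting the first-order identities above to eliminate the $u_{1i}$'s then yields, after standard manipulation, a schematic inequality of the form
$$K\phi''(u)|Du|^{2}\,\M^{11}(\bar x)\;\le\; C\bigl(\eta^{-2}+1\bigr)\sum_{l}\M^{ll}(\bar x)+\text{lower-order terms},$$
where the constant $C$ depends on $n,p,\gamma$ and the data.

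The decisive step invokes Lemma~\ref{newwritten} for the arrowhead matrix $D^2u(\bar x)$: the very negative $u_{11}$ yields $\M^{11}(\bar x)\ge\theta\sum_{l}\M^{ll}(\bar x)$ for some $\theta>0$ depending only on $n,p$ and $\sup f$. Combined with Corollary~\ref{below}, which ensures $\sum_{l}\M^{ll}$ is bounded below by a positive multiple of $f^{1-1/\binom{n}{p}}$, the displayed inequality forces $K\phi''(u)|Du(\bar x)|^{2}\theta\le C(\eta(\bar x)^{-2}+1)$, bounding $\eta(\bar x)|Du(\bar x)|$ and therefore $\Phi(\bar x)$, as required. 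The main obstacle in this strategy is precisely the passage to this arrowhead bound: the classical diagonal argument (Lemma~\ref{usedingradient}) is \emph{not} available here because the first-order condition produces nonzero off-diagonal entries $u_{1i}$ ($i\ge 2$), and for general arrowhead matrices $\M^{11}$ need not dominate $\sum\M^{ll}$, as the example following Lemma~\ref{usedingradient} illustrates. Lemma~\ref{newwritten} is tailored to extract this domination from the largeness of $|u_{11}|$, which is exactly what the maximum-principle setup with $K$ large is designed to guarantee.
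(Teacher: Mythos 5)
Your proposal is correct and follows essentially the same route as the paper: a maximum-principle argument with a weighted gradient test function, reduction to an arrowhead Hessian at the maximum point, first-order conditions forcing $u_{11}$ to be very negative there, and the decisive application of Lemma~\ref{newwritten} to obtain $\M^{11}\geq\theta\sum_l\M^{ll}$. The only differences are cosmetic choices of the auxiliary function ($|Du|^2e^{K\phi(u)}$ with a power of the cutoff, versus the paper's $u_\xi(1-|y|^2)\varphi(u)$ maximized over directions $\xi$), which do not change the structure of the argument.
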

	\begin{proof}
		Below we essentially repeat the argument from \cite{CW} with minor adjustments.
		
		Assume without loss of generality that $x_0=0$. Scaling if necessary one may also assume that $r=1$ and thus we will work throughout in the unit ball $B=B_1(0)$. 
		Define the function
		$$H: \overline{B}\times\mathbb S^n\ni(y,\xi)\rightarrow u_{\xi}(y)(1-|y|^2)\varphi(u(y)),$$
		with $\varphi(t):=\frac{1}{\sqrt{N-t}},\ N:=4sup_{B}|u|$. Suppose that $H$ attains its maximum at $(x,\xi_0)$. Without loss of generality we may assume that $\xi(0)=(1,0,\cdots,0)$ and then the function
		$$\hat{H}(y):=u_1(y)(1-|y|^2)\varphi(u(y))$$
		also has a maximum at $x$. Note that this forces $u_k(x)=0$ for any $k\in\lbr2,\cdots,n\rbr$ as $\frac{\pa}{\pa x_1}$ has to coincide with the gradient direction of $u$ at $x$.
		
		Rotating all but the first coordinates if necessary (see \cite{Tr}) we may assume that at $x$ the Hessian of $u$ is an arrowhead matrix.
		
		We may further assume that $\hat{H}(x)$ is so large so that
		\begin{equation}\label{x00}
			u_1(x)(1-|x|^2)\geq 10N
		\end{equation}
		for otherwise there is nothing to prove.
		
		Of course $D\hat{H}(x)=0$, which results in
		\begin{equation}\label{gradk}
			\forall k\in\lbr1,\cdots,n\rbr\ \ \frac{u_{1k}}{u_1}-\frac{2x_k}{1-|x|^2}+\frac{\varphi'}{\varphi}u_k=0
		\end{equation}
		at $x$.
		
		Furthermore
		\begin{equation}\label{estsforgrad}
			0\geq\sum_{k,l=1}^n\M^{kl}\frac{\hat{H}_{kl}}{\hat{H}}=\sum_{k,l=1}^n\M^{kl}\left[\frac{u_{1kl}}{u_1}-\frac{u_{1k}u_{1l}}{u_1^2}\right]
		\end{equation}
		$$-\sum_{k,l=1}^n\M^{kl}\left[\frac{2\delta_{kl}}{1-|x|^2}+\frac{4x_kx_l}{(1-|x|^2)^2}\right]+\sum_{k,l=1}^n\M^{kl}\left[\frac{\varphi'}{\varphi}u_{kl}+[\frac{\varphi''}{\varphi}-(\frac{\varphi'}{\varphi})^2]u_ku_l\right]$$
		$$:=I+II+III.$$
		
		Recall now that Lemma \ref{basicM} implies that
		$$\sum_{k,l=1}^n\M^{kl}u_{kl}=\binom np f,\ \sum_{k,l=1}^n\M^{kl}u_{kl1}=\pa_1f.$$
		
		Also by (\ref{gradk}) we may exchange $\frac{u_{1k}}{u_1}$ and $\frac{u_{1l}}{u_1}$ in the term $I$ by $\frac{2x_k}{1-|x|^2}-\frac{\varphi'u_k}{\varphi}$ and $\frac{2x_l}{1-|x|^2}-\frac{\varphi'u_l}{\varphi}$, respectively. Thus
		$$I+II\geq -\frac{|\pa_1f|}{u_1}$$
		$$-\sum_{k,l=1}^n\M^{kl}\left[\frac{2\delta_{kl}}{1-|x|^2}+\frac{8x_kx_l}{(1-|x|^2)^2}-2\frac{(x_ku_l+x_lu_k)\varphi'}{(1-|x|^2)\varphi}+u_ku_l(\frac{\varphi'}{\varphi})^2\right].$$
		
		By our choice $\varphi''-2\frac{\varphi'^2}{\varphi}\geq\frac{N^{-5/2}}{16}$. If ${M}:=\sum_{k=1}^n\M^{kk}$ at $x$ we obtain
		\begin{equation}
			0\geq I+II+III\geq \frac{\varphi'}{\varphi}\binom np f-\frac{|\pa_1f|}{u_1}-C{M}(\frac{1}{(1-|x|^2)^2}+\frac{u_1}{(1-|x|^2)\varphi})
		\end{equation}
		$$+\frac{N^{-5/2}}{16}\sum_{k,l=1}^n\M^{kl}u_ku_l.$$
		
		Exploiting the non-negativity of $f$ together with the positive-definiteness of $\M^{kl}$ we get after mulitplying by $16N^{5/2}\hat{H}$ the following inequality
		\begin{equation}\label{finalgrad}
			0\geq -\varphi(1-|x|^2)16N^{5/2}|\pa_1f|+(1-|x|^2)\varphi\M^{11}u_1^3-C{M}(\frac{N^2}{(1-|x|^2)}+Nu_1^2).
		\end{equation}
		
		As $f$ was assumed to be Lipschitz the first term is bounded from below by $-C\hat{H}$.
		
		Recall that from  (\ref{x00}) we have $u_1(x)(1-|x|^2)\geq 10N$. Thus (\ref{gradk}) for $k=1$ reads
		$$\frac{u_{11}}{u_1}=\frac{2x_k\varphi-\varphi'(1-|x|^2)u_1}{(1-|x|^2)\varphi}$$
		$$\leq \frac{4(N-u)\varphi'-\varphi'(1-|x|^2)u_1}{(1-|x|^2)\varphi}\leq \varphi'\frac{5N-(1-|x|^2)u_1}{(1-|x|^2)\varphi}$$
		$$\leq\frac{-\varphi'u_1}{2\varphi}<0.$$
		
		Hence 
		$$u_{11}\leq \frac{-\varphi'u_1^2}{2\varphi}\leq -c<0$$
		
		Note that we can safely assume that $c>\frac 2n(\M(A))^{1/n}\binom{n-1}{p}$ for otherwise $\hat{H}$ is bounded at $x$.
		
		Thus we can apply Lemma \ref{newwritten} (recall that we have assumed that $D^2u$ is arrowhead at $x$). 
		As a result we get that $\M^{11}\geq \theta M$ and hence (\ref{finalgrad}) reduces to
		
		$$0\geq -C(1-|x|^2)u_1+CM\left((1-|x|^2)u_1^3-C\frac{N^2}{(1-|x|^2)}-CNu_1^2\right)$$
		and thus finally $(1-|x|^2)u_1\leq CN.$
		
		Now the claimed result follows from evaluating $\hat{H}$ at $x_0$.
	\end{proof}
	\begin{remark}
		Similarly to \cite{CW} the estimate holds for slightly more general right hand sides. Also in the case of a constant $f$ a careful examination of the argument above reveals that $|Du(x_0)|\leq \frac{CN}{r}$ for some $C$ dependent only on $n$ and $p$. 
	\end{remark}
	An immediate application is the following analogue of Corollary 4.1 from \cite{Wa2}:
	\begin{corollary}
		Let $u\in C^3(\mathbb R^n)$ be an entire $p$-plurisubharmonic solution to
		$$\M(u)=0.$$
		If $u$ is bounded, or more generally if $u=o(|x|)$ for large $x$, then $u$ is a constant.
	\end{corollary}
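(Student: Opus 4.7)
The plan is to reduce the corollary to a direct application of the gradient estimate established in the preceding theorem, in the sharpened form recorded in the remark immediately after it (which handles the case of constant, and in particular vanishing, right hand side with an explicit scaling in $r$).

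First I would fix an arbitrary point $x_0 \in \mathbb{R}^n$. Since $u \in C^3(\mathbb{R}^n)$ is $p$-plurisubharmonic and solves $\M(u) = 0$, on any ball $B_r(x_0)$ the hypotheses of the first order interior estimate are satisfied with $f \equiv 0$, a (trivially) Lipschitz non-negative right hand side. Invoking the sharp form from the remark for constant right hand side gives
$$|Du(x_0)| \leq \frac{C\,N_r}{r}, \qquad N_r := 4\sup_{B_r(x_0)}|u|,$$
with $C$ depending only on $n$ and $p$.

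Next I would estimate $N_r$ using the growth hypothesis. In the bounded case, $N_r \leq 4\sup_{\mathbb{R}^n}|u|$ is uniform in $r$, so $|Du(x_0)| \leq C'/r \to 0$ as $r \to \infty$. In the more general case $u(x) = o(|x|)$, the inclusion $B_r(x_0) \subset B_{r+|x_0|}(0)$ combined with the growth condition gives $N_r \leq 4\sup_{|y| \leq r+|x_0|}|u(y)| = o(r+|x_0|) = o(r)$ as $r \to \infty$ (with $x_0$ fixed), so again $N_r/r \to 0$.

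Letting $r \to \infty$ in the gradient bound yields $Du(x_0) = 0$. Since $x_0$ was arbitrary, $Du \equiv 0$ on $\mathbb{R}^n$, and hence $u$ is constant. There is really no principal obstacle in the argument: the entire content is absorbed into the $r$-dependence of the gradient estimate provided by the remark; the only mild care needed is the observation that translating the ball so as to re-center at $x_0$ costs only $|x_0|$ in the sup, which is negligible against $r$ under the $o(|x|)$ growth hypothesis.
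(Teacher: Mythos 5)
Your argument is correct and is precisely the intended one: the paper states the corollary as an immediate consequence of the gradient estimate, and the standard route (as in Corollary 4.1 of Wang's lecture notes) is exactly your application of the sharpened bound $|Du(x_0)|\leq CN_r/r$ from the remark, followed by letting $r\to\infty$ under the growth hypothesis. No discrepancy with the paper's (implicit) proof.
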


	\section{Second order estimate}
	Second order interior estimates for fully nonlinear elliptic PDEs as a rule require much subtler techniques. Instead of working in a ball we shall assume and utilize the existence of an upper barrier function $w$. Such a barrier was considered in \cite{TU}, where
	$w$ was taken as a solution to a homogeneous equation with
	suitable boundary conditions.

	Below we prove the following theorem which is an analogue of Theorem 4.1 from \cite{CW}:
	\begin{theorem}\label{hessbound}
		Let $\Om$ be a bounded domain in $\mathbb R^n$ and  the $p$-plurisubharmonic function $u\in C^4(\Omega)\cap C^2(\overline{\Om})$ solves the equation
		$$\M(u)=f(x,u),$$
		where $f\in C^{1,1}(\overline{\Om}\times\mathbb R)$ is given. Suppose that $f\geq f_0>0$, and that there is a $p$-plurisubharmonic function $w$, satisfying $w>u$ in $\Om$ with equality on $\partial\Om$. Then for any fixed $\delta>0$ there is a bound
		\begin{equation}\label{c11}
			(w-u)^{1+\delta}|D^2u(x)|\leq C,
		\end{equation}
		where $C$ depends on $n,p,f_0,sup_{\Om}(|Du|+|Dw|)$ and $||f||_{C^{1,1}}$.
	\end{theorem}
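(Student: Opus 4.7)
The plan is to run a Pogorelov-type argument in the spirit of Chou--Wang \cite{CW}, with the algebraic facts from Section~1 playing the role of the $\s_k$-calculus there. I would consider the test function
\[\Phi(x,\xi)=(1+\delta)\log(w(x)-u(x))+\log u_{\xi\xi}(x)+\phi\!\left(\tfrac12|Du(x)|^2\right)\]
on $\Om\times\mathbb{S}^{n-1}$, with a suitably chosen convex increasing auxiliary function $\phi$ (for instance $\phi(s)=-\log(1-s/K)$ for $K>\sup_\Om|Du|^2$, tuned as the argument demands). Since $w-u$ vanishes on $\pa\Om$, $\Phi\to-\infty$ near the boundary and hence attains its maximum at an interior pair $(x_0,\xi_0)$. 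An orthogonal rotation reduces to $\xi_0=e_1$; because $\xi\mapsto u_{\xi\xi}(x_0)$ is maximized at $\xi_0$, the vector $e_1$ must be an eigenvector of $D^2u(x_0)$ for the largest eigenvalue $\la_1=u_{11}(x_0)$, and a second rotation fixing $e_1$ diagonalizes $D^2u(x_0)=\mathrm{diag}(\la_1,\ldots,\la_n)$. By Lemma \ref{Hessfla1} the matrix $\M^{ij}(x_0)$ is diagonal too.

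At $x_0$ the first order condition $D\Phi=0$ reads
\[(1+\delta)\frac{w_k-u_k}{w-u}+\frac{u_{11k}}{\la_1}+\phi' u_k\la_k=0\qquad(k=1,\ldots,n),\]
and the semi-definiteness of $D^2\Phi$ yields $\sum_i\M^{ii}\Phi_{ii}(x_0)\le 0$. Expanding the latter with Lemma \ref{basicM} applied at first and second order (so that $\M^{ii}u_{iik}=\pa_kf$ and $\M^{ii}u_{ii11}=\pa_{11}f-\M^{kl,rs}u_{kl1}u_{rs1}$) and invoking $\M^{ii}w_{ii}\ge 0$ (a G\aa rding-type consequence of the concavity of $\widetilde{\mathcal M}_p$ from Lemma \ref{concave}), one reaches an inequality of the schematic form
\[0\ge\phi''\sum_i\M^{ii}u_i^2\la_i^2+\phi'\sum_i\M^{ii}\la_i^2-\frac{1}{\la_1^2}\sum_i\M^{ii}u_{11i}^2-\frac{\M^{kl,rs}u_{kl1}u_{rs1}}{\la_1}-\frac{C}{w-u}+\mathcal R,\]
where $\mathcal R$ lumps together terms whose size is controlled by $\|f\|_{C^{1,1}}$, $\sup_\Om(|Du|+|Dw|)$, and $(w-u)^{-1}$-weights that the final multiplication by $(w-u)^{1+\delta}$ renders harmless.

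The crux is to absorb the bad third-order term $-\la_1^{-2}\sum_i\M^{ii}u_{11i}^2$. For $i=1$ I would substitute $u_{111}/\la_1$ from the first order identity, turning that summand into a controlled multiple of $\phi'^2 u_1^2\la_1^2\M^{11}$ which is absorbed by the $\phi''$-term. For $i\ge 2$ one uses $u_{11i}=u_{1i1}$ together with Lemma \ref{Hessfla2} to expand $\M^{kl,rs}u_{kl1}u_{rs1}$ and isolate the contributions $-2\M^{1i,i1}u_{1i1}^2$. At this stage I run the dichotomy with $\e=\delta/(16p(p-1))$. If $\e\la_1\ge\la_{n-p+1}$, Lemma \ref{usedinhessian} delivers exactly the inequality $-\tfrac{2(1+\delta)}{2+\delta}\M^{1i,i1}\ge\M^{ii}/\la_1$, which absorbs the bad term while keeping a fraction $\delta/(2+\delta)$ of $-\M^{1i,i1}u_{1i1}^2$ in reserve as the slack needed to close the inequality. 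If instead $\la_{n-p+1}>\e\la_1$, the top $n-p+1$ eigenvalues are comparable to $\la_1$; by Lemma \ref{toadd} one has $\M^{11}\la_1^2\ge c(\e)\la_1 f^{1-1/\binom np}$, and combined with Corollary \ref{below} and Lemma \ref{usedingradient} the ``good'' terms $\phi'\sum_i\M^{ii}\la_i^2$ and $\phi''\sum_i\M^{ii}u_i^2\la_i^2$ dominate every remaining negative contribution and directly force $\la_1\le C$.

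Combining the two cases gives $\la_1(x_0)(w(x_0)-u(x_0))^{1+\delta}\le C$, and because $(x_0,\xi_0)$ maximizes $\Phi$ this bound propagates to yield \eqref{c11}. The main obstacle I anticipate is the fine bookkeeping in the first case: the specific constant $\tfrac{2(1+\delta)}{2+\delta}$ supplied by Lemma \ref{usedinhessian} must dovetail exactly with the $\phi''$-absorption of the $i=1$ piece and with the threshold $\e=\delta/(16p(p-1))$, so the choice of $\phi$, the weight exponent $(1+\delta)$ and the fraction of $-\M^{1i,i1}u_{1i1}^2$ reserved as slack all have to be tuned in concert. This delicate interplay is precisely why the estimate must include an arbitrary loss $\delta>0$ rather than $\delta=0$.
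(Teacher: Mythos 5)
Your proposal follows essentially the same route as the paper: the same Pogorelov quantity (you take its logarithm, the paper works with $(w-u)^{1+\delta}\varphi(\tfrac12|Du|^2)u_{\xi\xi}$ directly), the same first- and second-order conditions at the maximum, the same dichotomy on $\la_{n-p+1}$ versus $\e\la_1$ with $\e=\delta/(16p(p-1))$, and the same key inputs (Lemmas \ref{usedingradient}, \ref{toadd}, \ref{usedinhessian}, Corollary \ref{below} and concavity of $\widetilde{\mathcal M}_p$). One small remark: in the degenerate case $\e\la_1\ge\la_{n-p+1}$ the paper closes the estimate not with leftover slack from the $-\M^{1i,i1}u_{1i1}^2$ terms but with the surviving good term $\frac{\varphi'}{\varphi}\widetilde{\mathcal M}_p^{11}u_{11}^2$ combined with Lemma \ref{toadd} (which you instead assign to the non-degenerate case, where Lemma \ref{usedingradient} and Corollary \ref{below} already suffice) — a reshuffling that does not affect the validity of the scheme.
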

	\begin{proof}
		As we mentioned beforehand we shall use the casewise approach similar to \cite{CW}. In fact the  major difference is the usage of Lemma \ref{cruciallemma}. We provide the details for the sake of completeness.
		
		Throughout the argument we shall work with a fixed $\delta>0$. Note that if $\delta_1<\delta_2$ and the result holds true for $\delta_1$ then it also easily holds for $\delta_2$. Hence it suffices to prove the Theorem only
		for $\delta$'s sufficiently small, say for $\delta\leq\frac12$, so that $1-\delta-\delta^2\geq \frac14$. 
		
		We shall work with the largest eigenvalue $\la_1(x)$ of the Hessian of $u(x)$. As $\la_1$ may fail to be smooth at points where $\la_1=\la_2$ we apply the (nowadays standard) perturbation trick. Rotating the coordinates if necessary we may assume that at a given point $x_0\in\Omega$  $\la_1$ is the eigenvalue with respect to the eigenvector $\frac{\partial}{\partial x_1}$. Then we consider the constant coefficient matrix
		$$B_{ij}=\delta_{ij}-\delta_{i1}\delta_{j1}$$
		and let $\hat{\la}_i,\ i=1,\cdots,n$ denote the eigenvalues of
		$u_{ij}-B_{ij}$ ordered in a decreasing order. As $B_{ij}$ is non negative definite we have  
		$$\hat{\la}_1(x)\leq \la_1(x)$$
		with equality at $x_0$, while
		$$\forall j>1\ \hat{\la}_j(x_0)\leq \la_2(x_0)-1<\hat{\la}_1(x_0),$$
		which implies that $\hat{\la}_1$ is a smooth function near $x_0$. 
		
		Below we shall use the following classical formulas (see \cite{S} for a proof)
		\begin{equation}\label{Spruck1}
	\frac{\partial \hat{\la}_1}{\partial x_k}(x_0)=\frac{\partial u_{11}}{\partial x_k}(x_0);
		\end{equation} 
	\begin{equation}\label{Spruck2}
		\frac{\partial^2 \hat{\la}_1}{\partial x_k\partial x_l}(x_0)=\frac{\partial^2 u_{11}}{\partial x_k\partial x_l}(x_0)+2\sum_{m>1}\frac{u_{1mk}u_{1ml}}{\la_1-\la_m+1}.
	\end{equation} 
		
		Define the function
		\begin{equation}\label{g}
			G:{\Omega}\ni x\rightarrow (w(x)-u(x))^{1+\delta}\varphi(\frac12|Du(x)|^2)\la_1(x),
		\end{equation}
		where $\varphi$ is given by 
		\begin{equation}\label{whattochoose}
			\varphi(t):=(\frac1{2S-t})^{\delta+\delta^2}, 
		\end{equation}
		$S$ being equal to $sup_{\Om}|Du|^2$. Observe that $\varphi$ is convex, uniformly bounded from above and below and satisfies
		\begin{equation}\label{phi}
			\frac{\varphi''}{\varphi}=\frac{1+\delta+\delta^2}{\delta+\delta^2}(\frac{\varphi'}{\varphi})^2.
		\end{equation}

		We assume that the maximum of $G$ occurs at $x_0$. Rotating the coordinates if necessary we may assume that $D^2u$ is a diagonal matrix and the direction of the eigenvector corresponding to the largest eigenvalue $\la_1$ is $(1,0,\cdots,0)$. Then in the new
		coordinates the function 
		$$\hat{G}(x):=(w(x)-u(x))^{1+\delta}\varphi(\frac12|Du(x)|^2)\hat{\la}_1(x)$$
		also has a maximum at $x_0$. As explained earlier $\hat{G}$ is twice differentiable at $x_0$, so we compute exploiting (\ref{Spruck1}) and (\ref{Spruck2})
		\begin{equation}\label{21}
			\forall k\in\lbr1,\cdots,n\rbr\ \ 0=\frac{(\hat{G})_k}{\hat{G}}=(1+\delta)\frac{w_k-u_k}{w-u}+\frac{\varphi'}{\varphi}\sum_{l=1}^nu_{lk}u_l+\frac{u_{11k}}{u_{11}}
		\end{equation}
		and
		\begin{equation}\label{22}
			0\geq\sum_{k=1}^n \widetilde{M}_{p}^{kk}\frac{\hat{G}_{kk}}{\hat{G}}=\sum_{k=1}^n\widetilde{M}_{p}^{kk}(1+\delta)\left[\frac{w_{kk}-u_{kk}}{w-u}-\frac{(w_k-u_k)^2}{(w-u)^2}\right]
		\end{equation}
		$$+\sum_{k=1}^n\widetilde{M}_{p}^{kk}\left[(\frac{\varphi''}{\varphi}-\frac{\varphi'^2}{\varphi^2})u_{kk}^2u_k^2+\frac{\varphi'}{\varphi}(u_{kk}^2+\sum_{l=1}^nu_{lkk}u_l)\right]$$
		$$+\sum_{k=1}^n\widetilde{M}_{p}^{kk}\left[\frac{u_{11kk}}{u_{11}}+2\sum_{m>1}\frac{u_{1mk}^2}{u_{11}(u_{11}-u_{mm}+1)}-\frac{u_{11k}^2}{u_{11}^2}\right]=
		:I+II+III.$$
		
		Below we handle each of the terms $I$, $II$ and $III$ separately.
		
		To begin with we recall that $w$ is $p$-plurisubharmonic and hence \newline $\sum_{k=1}^n\widetilde{M}_{p}^{kk}w_{kk}\geq 0$, while
		it follows from Lemma \ref{basicM} and the diagonality of $D^2u$ at $x_0$ that 
		$$\sum_{k=1}^n\widetilde{M}_{p}^{kk}u_{kk}=\tilde{f}$$
		with $\tilde{f}:=f^{1/\binom np}$. As a result
		\begin{equation}\label{I}
			I\geq -\frac{C_1}{w-u}-(1+\delta)\sum_{k=1}^n\widetilde{M}_{p}^{kk}\frac{(w_k-u_k)^2}{(w-u)^2}
		\end{equation}
		for a constant $C_1$ dependent only on $n, p, \delta$ and $sup_{\Om}\tilde{f}$.
		
		Next observe that by differentiating the equation
		$$\M(u)^{\frac1{\binom np}}=\widetilde{M}(u)=f(x,u)^{\frac1{\binom np}},$$
		together with the  the diagonality of $D^2u$ at $x_0$ and concavity of $\widetilde{M}_p$, we obtain
		\begin{equation}\label{secondderivative}
			-C_2(1+u_{11})\leq\tilde{f}_{11}=\sum_{k=1}^n\widetilde{M}_{p}^{kk}u_{kk11}+\sum_{kl,rs}\widetilde{M}_p^{kl,rs}u_{kl1}u_{rs1}
		\end{equation}
		$$=\sum_{k=1}^n\widetilde{M}_{p}^{kk}u_{kk11}+\sum_{l\neq k}\widetilde{M}_p^{lk,kl}u_{kl1}^2+
		\sum_{k,l=1}^n\widetilde{M}_p^{kk,ll}u_{kk1}u_{ll1}$$
		$$\leq \sum_{k=1}^n\widetilde{M}_{p}^{kk}u_{kk11}+2\sum_{l=2}^n\widetilde{M}_p^{1l,l1}u_{11l}^2,$$
		where $C_2$ depends on the $C^{1,1}$ norm of $f$ (or equivalently of $\tilde{f}$ and $f_0$) and the gradient bound for 
		$u$.
		
		Assume now that for an $\e>0$ chosen as in Lemma \ref{cruciallemma} we have
		$$\la_{n-p+1}\geq \e\la_1=\e u_{11}.$$
		
		Then using (\ref{secondderivative}) and concavity of $\widetilde{M}_{p}$ we can bound III as follows
		\begin{equation}\label{III}
			III\geq -C_2\frac{1+u_{11}}{u_{11}}-2\sum_{l=2}^n\widetilde{M}_p^{1l,l1}\frac{u_{11l}^2}{u_{11}}-
		\end{equation}
		$$\sum_{k=1}^n\widetilde{M}_{p}^{kk}[\frac{u_{11k}^2}{u_{11}^2}-2\sum_{m>1}\frac{u_{1mk}^2}{u_{11}(u_{11}-u_{mm}+1)}]\geq  -2C_2-\sum_{k=1}^n\widetilde{M}_{p}^{kk}\frac{u_{11k}^2}{u_{11}^2}.$$
		
		Note that from (\ref{21}) we have
		\begin{equation}\label{l}
			\frac{u_{11k}}{u_{11}}=-\left[(1+\delta)\frac{w_k-u_k}{w-u}+\frac{\varphi'}{\varphi}u_{kk}u_k\right]
		\end{equation}
		and hence from the AM-GM inequality we obtain for a suitable constant $C_\delta>0$ the bound
		$$III\geq -C_3-\frac1{\delta+\delta^2}\sum_{k=1}^n\widetilde{M}_{p}^{kk}(\frac{\varphi'}{\varphi})^2u_{kk}^2u_k^2-
		C_{\delta}\sum_{k=1}^n\widetilde{M}_{p}^{kk}\frac{(w_k-u_k)^2}{(w-u)^2}.$$
		
		Coupling thus the bounds for $I$ and $III$ we obtain
		$$0\geq I+II+III\geq -\frac{C_1}{w-u}-C_4\sum_{k=1}^n\widetilde{M}_{p}^{kk}\frac1{(w-u)^2}$$
		$$+\sum_{k=1}^n\widetilde{M}_{p}^{kk}\left[(\frac{\varphi''}{\varphi}-\frac{1+\delta+\delta^2}{\delta+\delta^2}\frac{\varphi'^2}{\varphi^2})u_{kk}^2u_k^2+\frac{\varphi'}{\varphi}(u_{kk}^2+\sum_{l=1}^nu_{lkk}u_l)\right].$$
		
		Recalling (\ref{phi}) we obtain 
		$$\sum_{k=1}^n\widetilde{M}_{p}^{kk}(\frac{\varphi''}{\varphi}-\frac{1+\delta+\delta^2}{\delta+\delta^2}\frac{\varphi'^2}{\varphi^2})u_{kk}^2u_k^2=0,$$
		while 
		$$\sum_{k=1}^n\widetilde{M}_{p}^{kk}\sum_{l=1}^nu_{lkk}u_l=\sum_{l=1}^n(\tilde{f})_lu_l\geq -C_5$$
		for a constant $C_5$ dependent on $f$ and the gradient bound of $u$.
		
		Thus if $\widetilde{M}$ denotes the sum $\sum_{k=1}^n\widetilde{M}_{p}^{kk}$ we obtain the inequality
		\begin{equation}\label{finallyeasycase}
			0\geq -C_6(1+\frac1{w-u})-\frac{C_4}{(w-u)^2}\widetilde{M}+\frac{\varphi'}{\varphi}\sum_{k=1}^n\widetilde{M}_{p}^{kk}u_{kk}^2. 
		\end{equation}
		
		At this place we use the assumption $\la_{n-p+1}\geq \e\la_1$ to bound \newline $\frac{\varphi'}{\varphi}\sum_{k=1}^n\widetilde{M}_{p}^{kk}u_{kk}^2$ by
		$$\frac{\varphi'}{\varphi}\widetilde{M}_{p}^{n-p+1 n-p+1}u_{n-p+1,n-p+1}^2\geq C_7 \widetilde{M}_{p}^{n-p+1 n-p+1}u_{11}^2.$$
		
		Recall that Lemma \ref{usedingradient} implies that $\widetilde{M}_{p}^{n-p+1 n-p+1}\geq\theta\widetilde{M}$ and thus
		
		$$0\geq C_8\widetilde{M}u_{11}^2-C_6(1+\frac1{w-u})-\frac{C_4}{(w-u)^2}\widetilde{M}.$$
		
		As $\widetilde{M}\geq p$ at $x_0$, by Corollary \ref{below} we obtain 
		$$(w-u)u_{11}=(w-u)\hat{\la}_1\leq C_9\ {\rm at}\ x_0$$
		and hence $(w-u)^{1+\delta}\hat{\la}_1\leq C_{10}$ everywhere by the maximality of $\hat{G}$ at $x_0$.

		\bigskip
		\bigskip
		Assume now that $\la_{n-p+1}\leq \e\la_1$. Following \cite{CW} (see also \cite{Wa2}) we shall apply (\ref{21}) to get rid of the third order term $\frac{u_{111}^2}{u_{11}^2}$ appearing in III and use the concavity of the equation together with Lemma \ref{cruciallemma}
		to get rid of the remaining third order terms. Here the usage of $\hat{\la}_1$ provides us with an additional positive summand which suffices to handle the negative third order terms.
		
		To this end recall that III reads
		\begin{align*}
			&III=\sum_{k=1}^n\widetilde{M}_{p}^{kk}\left[\frac{u_{11kk}}{u_{11}}+2\sum_{m>1}\frac{u_{1mk}^2}{u_{11}(u_{11}-u_{mm}+1)}-\frac{u_{11k}^2}{u_{11}^2}\right]\geq -C_2\frac{1+u_{11}}{u_{11}}+\\
			&2\sum_{m>1}^n\widetilde{M}_{p}^{11}\frac{u_{1m1}^2}{u_{11}(u_{11}-u_{mm}+1)}-2\sum_{l=2}^n\widetilde{M}_p^{1l,l1}\frac{u_{11l}^2}{u_{11}}-\sum_{k=1}^n\widetilde{M}_{p}^{kk}\frac{u_{11k}^2}{u_{11}^2},
		\end{align*}
		where we again used (\ref{III}).
		
		Now we apply (\ref{l}) for $k=1$ and the equivalent equality 
		$$\frac{w_k-u_k}{w-u}=-\frac{1}{1+\delta}(\frac{\varphi'}{\varphi}\sum_{l=1}^nu_{lk}u_l+\frac{u_{11k}}{u_{11}})$$
		for each $k\geq 2$ to obtain
		\begin{equation}\label{1to1}
			-(\frac{u_{111}}{u_{11}})^2\geq -\frac{1}{\delta(1+\delta)}(\frac{\varphi'}{\varphi})^2u_{11}^2u_1^2-(\frac{1}{1-\delta-\delta^2})(1+\delta)^2(\frac{w_1-u_1}{w-u})^2
		\end{equation}
		$$\geq  -\frac{1}{\delta(1+\delta)}(\frac{\varphi'}{\varphi})^2u_{11}^2u_1^2-4(1+\delta)^2(\frac{w_1-u_1}{w-u})^2$$
		and
		\begin{equation}\label{2ton}
			\forall k\geq 2\ \  -(\frac{w_k-u_k}{w-u})^2\geq -\frac{1}{1+\delta}(\frac{u_{11k}}{u_{11}})^2-\frac{1}{\delta(1+\delta)}(\frac{\varphi'}{\varphi})^2u_{kk}^2u_k^2.
		\end{equation}
		
		These two inequalities coupled with the estimate for III yield
		\begin{equation}\label{secondcase}
			0\geq \sum_{k=1}^n\widetilde{M}_{p}^{kk}(1+\delta)\left[\frac{w_{kk}-u_{kk}}{w-u}-\frac{(w_k-u_k)^2}{(w-u)^2}\right]
		\end{equation}
		$$+\sum_{k=1}^n\widetilde{M}_{p}^{kk}\left[(\frac{\varphi''}{\varphi}-\frac{\varphi'^2}{\varphi^2})u_{kk}^2u_k^2+\frac{\varphi'}{\varphi}(u_{kk}^2+\sum_{l=1}^nu_{lkk}u_l)\right]$$
		$$+\sum_{k=1}^n\widetilde{M}_{p}^{kk}\left[\frac{u_{11kk}}{u_{11}}+2\sum_{m>1}\frac{u_{1mk}^2}{u_{11}(u_{11}-u_{mm}+1)}-\frac{u_{11k}^2}{u_{11}^2}\right]
		$$
		$$\geq (1+\delta)\frac{\tilde{f}}{w-u}-(1+\delta)(5+4\delta)\widetilde{M}_{p}^{11}(\frac{w_1-u_1}{w-u})^2$$
		$$+\sum_{k=1}^n\widetilde{M}_{p}^{kk}\left[(\frac{\varphi''}{\varphi}-(1+\frac{1}{\delta(1+\delta)})\frac{\varphi'^2}{\varphi^2})u_{kk}^2u_k^2\right]+\frac{\varphi'}{\varphi}\left[\sum_{k=1}^n\widetilde{M}_{p}^{kk}u_{kk}^2+\sum_{l=1}^nu_l(\tilde{f})_l\right]$$
		$$-2C_2+\sum_{k=2}^n\left[-2\widetilde{M}_{p}^{1k,k1}\frac{u_{11k}^2}{u_{11}}+2\widetilde{M}_{p}^{11}\frac{u_{11k}^2}{u_{11}(u_{11}-u_{kk}+1)}-\frac{\delta+2}{\delta+1}\widetilde{M}_{p}^{kk}\frac{u_{11k}^2}{u_{11}^2}\right].$$

		Observe that  Lemma \ref{cruciallemma} implies that the last term is non negative as long as $\la_1\geq 2+\frac2\delta$ which we can safely assume as otherwise we are through. On the other hand by the choice of $\varphi$ (see (\ref{phi})) the third term is zero.
		
		Thus 
		\begin{equation}\label{finally}
			0\geq- C_{11}(1+\frac1{w-u})-C_{12}\widetilde{M}_{p}^{11}(\frac1{w-u})^2+\frac{\varphi'}{\varphi}\widetilde{M}_{p}^{11}u_{11}^2.
		\end{equation}
		
		Note that if $\frac {\varphi'}{2\varphi}u_{11}^2\leq \frac{C_{12}}{(w-u)^2}$ at $x_0$, then $(w-u)u_{11}$ is under control and the proof is finished. Hence we may assume that the reverse inequality holds which coupled with (\ref{finally}) and Proposition \ref{toadd} yields
		$$0\geq - C_{11}(1+\frac1{w-u})+C_{13}\widetilde{M}_{p}^{11}u_{11}^2\geq  - C_{11}(1+\frac1{w-u})+C_{14}u_{11},$$
		which finally yields the claim in this case.
	\end{proof}
	\section{Liouville theorem}
	In this final section we prove the following result:
	\begin{theorem}
		Let $u$ be a $p$-plurisubharmonic $C^4$ smooth function in $\mathbb R^n$. Assume that for some positive constants $C_1,C_2,C_3$ and $C_4$ we have the bound
		$$C_1|x|^2-C_2\leq u(x)\leq C_3|x|^2+C_4.$$
		If
		$$\M(u)=const>0,$$
		then $u$ is a quadratic polynomial.
	\end{theorem}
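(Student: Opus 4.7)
The plan is to combine the interior estimates (Theorems 2.1 and 3.1) with the concavity of $\widetilde{\mathcal M}_p$ and Evans-Krylov to reduce the problem to a Liouville-type statement for a linear uniformly elliptic equation. The crucial step is a scale-invariant global $C^{1,1}$ bound; the rest is then standard.

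\textbf{Step 1 (Global $C^{1,1}$).} Fix $x_0\in\mathbb R^n$ and, for $R\ge 1+|x_0|$, consider $v(y):=R^{-2}u(x_0+Ry)$, so that $D^2v(y)=D^2u(x_0+Ry)$, $\M(v)\equiv\text{const}$, and $v$ inherits quadratic growth bounds with constants independent of $x_0$ and $R$. Pick $K$ large enough that the sublevel set $\Omega:=\{v<K\}$ contains a fixed ball about the origin and is contained in some fixed ball $B_{R^*}$, uniformly in $x_0$ and $R$. The constant function $w\equiv K$ is $p$-plurisubharmonic, strictly dominates $v$ inside $\Omega$, and equals $v$ on $\partial\Omega$. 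The first-order estimate (Theorem 2.1, applied with $f$ constant so Lipschitz constant vanishes) gives a uniform bound on $\sup_\Omega|Dv|$, and Theorem 3.1 then yields $(K-v)^{1+\delta}|D^2v|\le C$ throughout $\Omega$. Evaluating at $y=0$ produces $|D^2u(x_0)|=|D^2v(0)|\le M$ uniformly in $x_0$, hence $|D^2u|\le M$ on $\mathbb R^n$.

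\textbf{Step 2 (Uniform ellipticity and smoothness).} With $|D^2u|\le M$ globally and $\M(u)=c>0$, every partial sum $\lambda_{i_1}+\cdots+\lambda_{i_p}$ of eigenvalues of $D^2u$ lies between two positive constants. By Lemma \ref{Hessfla1}, the coefficients of the linearized operator
\[
\mathcal L(v):=\sum_{k,l=1}^n\widetilde{\mathcal M}_p^{kl}(u)\,v_{kl}
\]
are then uniformly elliptic on $\mathbb R^n$. Since $\widetilde{\mathcal M}_p$ is concave, Evans-Krylov applies, and together with Schauder bootstrap it gives $u\in C^{k,\alpha}_{\mathrm{loc}}(\mathbb R^n)$ for every $k$, with uniform interior estimates on balls of fixed radius.

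\textbf{Step 3 (Differentiate and apply linear Liouville).} Differentiating $\widetilde{\mathcal M}_p(u)=c^{1/\binom np}$ once in a fixed direction $e$ yields $\mathcal L(u_e)=0$; differentiating twice and using that by concavity the quadratic form defined by $\widetilde{\mathcal M}_p^{kl,rs}$ is negative semidefinite on symmetric matrices gives
\[
\mathcal L(u_{ee})=-\sum_{kl,rs}\widetilde{\mathcal M}_p^{kl,rs}(u)\,u_{kle}\,u_{rse}\ge 0.
\]
Thus $u_{ee}$ is a bounded $\mathcal L$-subsolution on $\mathbb R^n$ for a uniformly elliptic operator with H\"older continuous coefficients. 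Setting $M_e:=\sup_{\mathbb R^n}u_{ee}$, the function $M_e-u_{ee}\ge0$ is a nonnegative supersolution; the Krylov-Safonov weak Harnack inequality combined with the strong maximum principle forces $u_{ee}\equiv M_e$. Since this holds for every direction $e$, $D^2u$ is constant on $\mathbb R^n$ and $u$ is a quadratic polynomial.

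\textbf{Main obstacle.} The principal technical point is the scale invariance in Step 1: one must verify that the dependencies of the constants in Theorems 2.1 and 3.1 on $\sup|u|$, $\sup|Du|$, $\|f\|_{C^{1,1}}$ and on the geometry of the domain remain uniform under the rescaling $u\mapsto v$. This is exactly where the \emph{two-sided} quadratic growth hypothesis is used: the upper bound gives the uniform barrier $K$, and (together with linear-growth bounds on $|Du|$ derived from Theorem 2.1) guarantees that $\sup_\Omega|Dv|$ stays bounded independently of $R$ and $x_0$. A subsidiary delicacy in Step 3 is that one runs the Liouville theorem on a subsolution rather than a solution; note that the concavity of $\widetilde{\mathcal M}_p$ plays a dual role, providing both the Evans-Krylov regularity needed to make the coefficients H\"older continuous and the sign $\mathcal L(u_{ee})\ge0$ needed to invoke the weak-Harnack Liouville argument.
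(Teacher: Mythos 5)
Your Step 1 is essentially the paper's argument: the paper rescales $u$ about the origin as $v_R(y)=(u(Ry)-R^2)/R^2$, takes the sublevel set $\{v_R<0\}$ with the constant barrier $w\equiv 0$, and combines the interior gradient estimate with the Pogorelov estimate to bound $D^2u$ on $\{|x|\le R/(2\sqrt{C_3})\}$; your recentering at each $x_0$ with the barrier $w\equiv K$ is the same mechanism and is fine, as is your Step 2.

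The gap is in Step 3. From the concavity you correctly get that $u_{ee}$ is a bounded entire \emph{subsolution} of the uniformly elliptic operator $\mathcal L$, but a bounded subsolution on $\mathbb R^n$ need not be constant when $n\ge 3$: already for $\mathcal L=\Delta$ the function $\max\bigl(0,\,1-|x|^{2-n}\bigr)$ is bounded, subharmonic and nonconstant, with supremum $1$ not attained. The strong maximum principle therefore does not apply (the supremum of $u_{ee}$ need not be attained), and the weak Harnack inequality applied to $h:=M_e-u_{ee}\ge 0$ only gives $\fint_{B_R}h^{p_0}\le C\,(\inf_{B_R}h)^{p_0}$ with $\inf_{B_R}h\to 0$, which is perfectly consistent with $h\not\equiv 0$ (the averages are diluted at rate $R^{-n}$; the example above saturates this). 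Smoothness of the coefficients does not help, since the counterexample is for the Laplacian itself. The Krylov--Safonov oscillation decay that would force constancy requires a two-sided equation, not a one-sided inequality.

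The paper closes the argument differently, and you should too: once $|D^2u|\le M$ globally, apply the Evans--Krylov interior estimate on $B_R$ in its scale-invariant form,
\begin{equation*}
[D^2u]_{C^\alpha(B_{R/2})}\;\le\;\frac{C}{R^\alpha}\,\sup_{B_R}|D^2u|\;\le\;\frac{CM}{R^\alpha},
\end{equation*}
and let $R\to\infty$ to conclude that the H\"older seminorm of $D^2u$ vanishes, hence $D^2u$ is constant. This uses exactly the concavity and uniform ellipticity you have already established, and requires no Liouville theorem for subsolutions.
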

	\begin{proof}
		We follow the argument from \cite{HSX} which exploits the interior $C^2$ estimate and a global $C^1$ estimate. In our argument the upper bound for $u$ is utilized to derive the necessary $C^1$ estimate.
		
		Adding a constant if necessary we may assume that $u(0)=0$. We fix a sufficiently large radius $R>max\lbr 1,\sqrt{4C_4}\rbr$. Consider the function
		$$v_R(y):=\frac{u(Ry)-R^2}{R^2}.$$
		
		It is straightforward to check that the function $v_R$ is $p$-plurisubharmonic in the rescaled $y$-variables and satisfies
		\begin{equation}\label{lio1}
			\M(v_R(y))=\M(u(Ry))=const>0.
		\end{equation}
		Consider $\Om_R$- the domain defined as the connected component of
		$$\lbr y\in\mathbb R^n|\ v_R(y)<0\rbr$$
		that contains the origin.
		
		Note that 
		$$\Om_R\subset\lbr y\in\mathbb R^n|\ C_1R^2|y|^2-C_2<R^2\rbr\subset\lbr y\in\mathbb R^n|\ |y|^2\leq \frac{C_2+1}{C_1}\rbr,$$
		hence $\Om_R$ is bounded. 
		
		By definition $v_R<0$ on $\Om_R$ and via the standard maximum principle for Hessian type equations (see \cite{CNS}) comparison with $C|y|^2$ for some $C$ large enough, dependent only on $n$ and $p$,  yields an uniform lower bound for $v$ on $\Om_R$.
		
		Fix a point $y_0\in \Om_R$ and let $y$ be in $B_1(y_0)$. Then, using the upper bound on $u$ we get
		$$v_R(y)=\frac{u(R(y_0+y-y_0))-R^2}{R^2}\leq \frac{2C_3R^2|y_0|^2+2C_3R^2|y-y_0|^2+C_4-R^2}{R^2}$$
		$$\leq 2C_3|y_0|^2+2C_3+C_4-1,$$
		while at the same time
		$$v_R(y)\geq\frac{\frac{C_1}2R^2|y_0|^2-C_1R^2|y-y_0|^2-C_2-R^2}{R^2}\geq C_1-C_2-1.$$
		
		As a result $v_R$ is uniformly bounded in the ball $B_1(y_0)$ and hence by Theorem \ref{gradbound}
		\begin{equation}\label{lio2}
			|Dv_R(y_0)|\leq C
		\end{equation}
		for every point $y_0\in\Om_R$ and a constant $C$ independent on $y_0$.
		
		With this global $C^1$ bound in hand the proof repeats the argument form \cite{HSX}. We include it for the sake of completeness.
		
		The uniform bound (\ref{lio2}) and Theorem \ref{hessbound} result in 
		$$(-v_R(y))^2|D^2v_R(y)|\leq C$$
		in $\Om_R$. Hence on 
		$$\Om'_R:=\lbr y\in\Om_R|\ v_R\leq -\frac12\rbr=\lbr y\in\Om_R|\ u(Ry)\leq\frac12R^2\rbr$$
		the Hessian of $v_R$ is uniformly bounded. But $D^2v_R(y)=(D^2u)(Ry)$ and hence $u$ has a bounded Hessian on
		$$\lbr x\in\mathbb R^n |\ u(x)\leq \frac12R^2\rbr\supset\lbr x\in \mathbb R^n|\ C_3|x|^2+C_4<\frac12R^2\rbr\supset\lbr |x|\leq\frac{R}{2\sqrt{C_3}}\rbr.$$
		
		Finally Evans-Krylov theory (see \cite{GT}) implies that for some $\alpha\in(0,1)$
		$$|D^2u|_{C^{\alpha}(B_{\frac{R}{4\sqrt{C_3}}}(0))}\leq \frac{sup_{B_{\frac{R}{2\sqrt{C_3}}}(0)}|D^2u|}{R^{\alpha}}\rightarrow 0^+$$
		as $R\rightarrow\infty$. This implies that $u$ has to be a quadratic polynomial.
	\end{proof}

	Faculty of Mathematics and Computer Science,  Jagiellonian University 30-348 Krakow, Lojasiewicza 6, Poland;\\ e-mail: {\tt slawomir.dinew@im.uj.edu.pl}
	
\end{document}